\newtheorem{theorem}{Theorem}[section]
\newtheorem{corollary}[theorem] {Corollary}
\newtheorem{definition}[theorem]{Definition}
\newtheorem{lemma} [theorem]{Lemma}
\newtheorem{proposition}[theorem]{Proposition}
\newtheorem{remark}[theorem]{Remark}
\title{\bf A Note on the Sparing Number of Graphs}
\author{{\bf N K Sudev \footnote{Department of Mathematics, Vidya Academy of Science \& Technology, Thalakkottukara, Thrissur - 680501, email: {\em sudevnk@gmail.com}}} and {\bf K A Germina\footnote{Department of Mathematics, School of Mathematical \& Physical Sciences, Central University of Kerala, Kasaragod, email:{\em srgerminaka@gmail.com}}}}
\date{}
\begin{document}
\maketitle

\begin{abstract}
An integer additive set-indexer is defined as an injective function $f:V(G)\rightarrow 2^{\mathbb{N}_0}$ such that the induced function $g_f:E(G) \rightarrow 2^{\mathbb{N}_0}$ defined by $g_f (uv) = f(u)+ f(v)$ is also injective. An IASI $f$ is said to be a weak IASI if $|g_f(uv)|=max(|f(u)|,|f(v)|)$ for all $u,v\in V(G)$. A graph which admits a weak IASI may be called a weak IASI graph. The set-indexing number of an element of a graph $G$, a vertex or an edge, is the cardinality of its set-labels. The sparing number of a graph $G$ is the minimum number of edges with singleton set-labels, required for a graph  $G$ to admit a weak IASI.  In this paper, we study the sparing number of certain graphs and the relation of sparing number with some other parameters like matching number, chromatic number, covering number, independence number etc.
\end{abstract}
\textbf{Key words}: Weak integer additive set-indexers, mono-indexed elements of a graph, sparing number of a graph.\\
\textbf{AMS Subject Classification : 05C78} 

\section{Introduction}

For all  terms and definitions, not defined specifically in this paper, we refer to \cite{FH}. Unless mentioned otherwise, all graphs considered here are simple, finite and have no isolated vertices.

Let $\mathbb{N}_0$ denote the set of all non-negative integers. For all $A, B \subseteq \mathbb{N}_0$, the sum of these sets is denoted by  $A+B$ and is defined by $A + B = \{a+b: a \in A, b \in B\}$. The set $A+B$ is called the sumset of the sets $A$ and $B$. 

An {\em integer additive set-indexer} (IASI, in short) is defined in \cite{GA} as an injective function $f:V(G)\rightarrow 2^{\mathbb{N}_0}$ such that the induced function $g_f:E(G) \rightarrow 2^{\mathbb{N}_0}$ defined by $g_f (uv) = f(u)+ f(v)$ is also injective. 

The cardinality of the labeling set of an element (vertex or edge) of a graph $G$ is called the {\em set-indexing number} of that element.

\begin{lemma}\label{L-Card}
\cite{GS1} Let $A$ and $B$ be two non-empty finite subsets of $\mathbb{N}_0$. Then, $max(|A|,|B|)\\ \le |A+B|\le |A|.|B|$. Therefore, for an integer additive set-indexer $f$ of a graph $G$, we have $max(|f(u)|, |f(v)|)\le |g_f(uv)|= |f(u)+f(v)| \le |f(u)| |f(v)|$, where $u,v\in V(G)$.
\end{lemma}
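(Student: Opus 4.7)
The plan is to prove the two inequalities separately using elementary counting arguments, since this is a purely combinatorial statement about sumsets in $\mathbb{N}_0$.

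For the upper bound $|A+B| \le |A| \cdot |B|$, I would observe that every element of $A+B$ arises as $a+b$ for some ordered pair $(a,b) \in A \times B$. The map $(a,b) \mapsto a+b$ from $A \times B$ onto $A+B$ is surjective by definition of the sumset, so $|A+B|$ is bounded above by $|A \times B| = |A| \cdot |B|$. No subtlety is involved beyond noting this covering map.

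For the lower bound $|A+B| \ge \max(|A|,|B|)$, I would assume without loss of generality that $|A| \ge |B|$ (the other case is symmetric). Since $B$ is non-empty, fix any element $b_0 \in B$. Then the translated set $A + b_0 = \{a + b_0 : a \in A\}$ is contained in $A+B$, and translation by a fixed integer is an injection on $\mathbb{N}_0$, so $|A + b_0| = |A|$. Hence $|A+B| \ge |A + b_0| = |A| = \max(|A|,|B|)$.

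The second part of the lemma, concerning $g_f(uv) = f(u)+f(v)$ for an IASI $f$ of a graph $G$, follows immediately by applying the first part with $A = f(u)$ and $B = f(v)$, both of which are non-empty finite subsets of $\mathbb{N}_0$ by the definition of an IASI. I do not expect any real obstacle here; the only point requiring mild care is ensuring that $A$ and $B$ are genuinely non-empty (guaranteed by hypothesis) so that we can pick the element $b_0$ used in the translation argument.
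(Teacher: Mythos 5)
Your argument is correct and complete: the surjection $(a,b)\mapsto a+b$ from $A\times B$ onto $A+B$ gives the upper bound, and the injective translate $A+b_0\subseteq A+B$ (together with the symmetric case) gives the lower bound, after which the graph-theoretic statement is an immediate specialization to $A=f(u)$, $B=f(v)$. The paper itself supplies no proof here --- the lemma is recalled from the cited reference --- but your argument is the standard one for this sumset estimate and is exactly what is needed.
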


\begin{definition}{\rm
\cite{GS1} An IASI $f$ is said to be a {\em weak IASI} if $|g_f(uv)|=max(|f(u)|,|f(v)|)$ for all $u,v\in V(G)$. A graph which admits a weak IASI may be called a {\em weak IASI graph}. A weak  IASI $f$ is said to be {\em weakly $k$-uniform IASI} if $|g_f(uv)|=k$, for all $u,v\in V(G)$ and for some positive integer $k$.}
\end{definition}

\begin{definition}{\rm
\cite{GS3} An element (a vertex or an edge) of graph which has the set-indexing number 1 is called a {\em mono-indexed element} of that graph.}
\end{definition}

\begin{definition}{\rm
\cite{GS3} The {\em sparing number} of a graph $G$ is defined to be the minimum number of mono-indexed edges required for $G$ to admit a weak IASI and is denoted by $\varphi(G)$.}
\end{definition}

\section{New Results on the Sparing Number of Graphs}

First, we recall the following theorems proved in \cite{GS3}. 

\begin{theorem}\label{T-SB1}
\cite{GS1} If a connected graph $G$ admits a weak IASI, then $G$ is bipartite or $G$ has at least one mono-indexed edge.
\end{theorem}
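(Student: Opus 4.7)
The plan is to exploit the weak IASI condition $|g_f(uv)|=\max(|f(u)|,|f(v)|)$ to show that at most one endpoint of any edge can have a non-singleton set-label, and then use this to force a mono-indexed edge whenever $G$ contains an odd cycle.

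First I would establish the key sumset fact: if $A,B$ are finite non-empty subsets of $\mathbb{N}_0$ with $|A+B|=\max(|A|,|B|)$, then $\min(|A|,|B|)=1$. Assuming without loss of generality $|A|\geq|B|$ and $|B|\geq 2$, pick distinct elements $b_0<b_1$ in $B$; then $A+b_0\subseteq A+B$ contributes $|A|$ elements, and the element $\max(A)+b_1$ strictly exceeds $\max(A+b_0)$, so it lies in $A+B\setminus(A+b_0)$, forcing $|A+B|\geq|A|+1$ and contradicting the hypothesis. This refines Lemma \ref{L-Card} to give a characterisation of equality.

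Applying this to the weak IASI $f$, every edge $uv$ of $G$ must satisfy $\min(|f(u)|,|f(v)|)=1$. Consequently, if $V_2=\{v\in V(G):|f(v)|\geq 2\}$ and $V_1=V(G)\setminus V_2$, then no edge of $G$ has both endpoints in $V_2$, i.e.\ $V_2$ is an independent set and $V_1$ is a vertex cover. Equivalently, the partition $(V_1,V_2)$ gives a proper $2$-colouring of $G$ if and only if $V_1$ is also independent.

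Now the dichotomy follows quickly. If $V_1$ is independent, then $(V_1,V_2)$ is a bipartition, so $G$ is bipartite. Otherwise there exist adjacent $u,v\in V_1$, and since $|f(u)|=|f(v)|=1$, say $f(u)=\{a\}$ and $f(v)=\{b\}$, we get $g_f(uv)=\{a+b\}$, a singleton. Hence $uv$ is a mono-indexed edge. The only mild subtlety is the sumset equality characterisation in the first step; once that is in hand, the graph-theoretic argument is immediate, and connectedness of $G$ is not actually needed beyond placing the theorem in context.
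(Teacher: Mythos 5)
Your proposal is correct: the equality characterisation $|A+B|=\max(|A|,|B|)\Rightarrow\min(|A|,|B|)=1$ is proved soundly via the element $\max(A)+b_1$, and the resulting dichotomy (the non-singleton-labelled vertices form an independent set $V_2$, so either $V_1$ is also independent and $G$ is bipartite, or some edge lies inside $V_1$ and is mono-indexed) is exactly the standard argument for this result. Note that the paper itself gives no proof here --- the theorem is simply recalled from \cite{GS1} --- so there is nothing to compare against beyond observing that your write-up supplies the missing details cleanly, and your remark that connectedness is not actually used is accurate.
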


\begin{theorem}\label{T-SB2}
\cite{GS1} Let $G$ be a bipartite graph which admits a weak IASI and let $u,v$ be two non-adjacent vertices in $G$. Then, $G\cup \{uv\}$ is a weak IASI graph if and only if $G\cup \{uv\}$ is bipartite or $uv$ is a mono-indexed edge.
\end{theorem}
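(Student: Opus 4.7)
The plan is to use the structural fact—essentially a consequence of Lemma \ref{L-Card} combined with the classical bound $|A+B|\ge |A|+|B|-1$ for non-empty finite subsets of $\mathbb{Z}$—that in any weak IASI \emph{every} edge must carry at least one endpoint whose set-label is a singleton; equivalently, the singleton-labeled vertices form a vertex cover of the graph. This observation will be the main driver of both directions.

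For the sufficiency direction, if $G\cup\{uv\}$ is bipartite with bipartition $(X,Y)$, I would assign pairwise distinct singleton labels to the vertices of $X$ and pairwise distinct non-singleton labels to the vertices of $Y$, choosing the latter from a sparse enough portion of $\mathbb{N}_0$ to keep the induced edge map injective; every edge then has a singleton endpoint, giving a weak IASI. Alternatively, if $uv$ is to be a mono-indexed edge, we choose labels so that both $f(u)$ and $f(v)$ are singletons, whence $|f(u)+f(v)|=1=\max(|f(u)|,|f(v)|)$ and the weak IASI condition on $uv$ is automatic; compatibility with the existing weak IASI of $G$ is then a routine relabeling argument to preserve injectivity of $g_f$.

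For the necessity direction, suppose $G\cup\{uv\}$ admits a weak IASI and is not bipartite. Since $G$ is bipartite, the edge $uv$ must close an odd cycle $C$ in $G\cup\{uv\}$, and hence $u$ and $v$ lie in the same part of $G$'s bipartition. In the given weak IASI, the singleton-labeled vertices of $C$ form a vertex cover of $C$, and because $C$ has odd length this cover must contain two consecutive vertices, forcing at least one edge of $C$ to be mono-indexed. The crucial step is to argue that this mono-indexed edge is necessarily the chord $uv$ itself, by using the bipartite structure of $G$ along the $u$--$v$ path of $C$ to force $u$ and $v$ to share the same label-type (singleton versus non-singleton), and then invoking the weak IASI condition at $uv$ to conclude that both are singletons.

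The main obstacle will be precisely this last deduction, since a weak IASI of $G$ is not required to respect the bipartition of $G$ (singletons can appear in both parts, producing mono-indexed edges inside $G$), so it is not \emph{a priori} clear that the parity obstruction forced by the odd cycle lands on the chord $uv$ rather than on some edge of the bipartite portion of $C$. Handling this carefully—via a walk along the odd cycle $C$ that tracks switches between singleton and non-singleton labels, combined with the parity of the $u$--$v$ path inside $G$ and the fact that $u,v$ occupy the same part of $G$'s bipartition—will be the core technical step, after which Theorem \ref{T-SB1} provides the final closure.
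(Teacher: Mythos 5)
The paper itself gives no proof of Theorem \ref{T-SB2}: it is recalled verbatim from \cite{GS1}, so your attempt can only be judged on its own merits. Your driving observation --- that Lemma \ref{L-Card} together with the classical bound $|A+B|\ge |A|+|B|-1$ forces every edge of a weak IASI graph to have at least one singleton-labelled endpoint, so that the mono-indexed vertices form a vertex cover --- is correct and is indeed the standard tool in this literature, and your sufficiency argument is sound.

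The genuine gap is in the necessity direction, and it is exactly the one you flag yourself: the odd-cycle/vertex-cover parity argument only produces \emph{some} mono-indexed edge on the odd cycle closed by $uv$, and the step you propose in order to push that edge onto the chord --- namely that the bipartition of $G$ determines which vertices carry singleton labels --- is false, since a weak IASI of a bipartite graph may place singletons in both parts. Concretely, let $G=P_3$ with vertices $a,b,c$ and edges $ab,bc$, and set $f(a)=\{1,2\}$, $f(b)=\{3\}$, $f(c)=\{4\}$. This is a weak IASI of $G$ that extends to a weak IASI of $G\cup\{ac\}=K_3$ (the edge labels $\{4,5\}$, $\{7\}$, $\{5,6\}$ are distinct and every edge has a singleton endpoint), yet $K_3$ is non-bipartite and $ac$ is not mono-indexed; here $a$ and $c$ lie in the same part of the bipartition of $G$ but carry different label types. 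So the ``only if'' direction cannot be established as literally stated; it holds only under the implicit additional hypothesis --- visible in how the theorem is invoked inside the proof of Theorem \ref{T-SNGB1} --- that the given weak IASI of $G$ has no mono-indexed edge, in which case the singleton class is simultaneously independent and a vertex cover, hence coincides with one part of the bipartition of each component, and the necessity reduces to a one-line parity argument with no need for your walk along the cycle. You should either add that hypothesis explicitly and rebuild the necessity direction around it, or accept that the core technical step of your plan does not exist.
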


Due to Theorem \ref{T-SB1}and Thorem \ref{T-SB2}, we have the following theorem.

\begin{theorem}\label{T-SNGB1}
Let $G$ be a weak IASI graph and let $E'$ be a minimal subset of $E(G)$ such that $G-E'$ is bipartite. Then, $\varphi(G)=|E'|$. 
\end{theorem}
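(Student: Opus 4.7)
My plan is to prove both inequalities $\varphi(G)\le |E'|$ and $\varphi(G)\ge |E'|$, treating ``minimal'' as ``of smallest cardinality.''

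For the lower bound $\varphi(G)\ge |E'|$, I would first draw out a structural consequence of Lemma \ref{L-Card}: in any weak IASI $f$, two adjacent vertices $u,v$ with $|f(u)|,|f(v)|\ge 2$ would force $|g_f(uv)|\ge |f(u)|+|f(v)|-1\ge 3$, violating $|g_f(uv)|=\max(|f(u)|,|f(v)|)$. Hence the non-singleton vertices form an independent set in any weak IASI, and every mono-indexed edge joins two singleton vertices. Fix an optimal weak IASI of $G$ and let $E^{\ast}$ be its set of mono-indexed edges. Then $G-E^{\ast}$ is bipartite, with the bipartition consisting of the singleton vertices on one side and the non-singleton vertices on the other. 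Since $|E'|$ is minimum with the property that $G-E'$ is bipartite, $|E'|\le |E^{\ast}|=\varphi(G)$.

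For the upper bound $\varphi(G)\le |E'|$, I would construct an explicit weak IASI of $G$ with at most $|E'|$ mono-indexed edges. Fix a bipartition $(A,B)$ of $G-E'$; by minimality of $E'$, one may assume every edge in $E'$ lies within $A$ or within $B$, since any $A$-$B$ edge in $E'$ could be dropped without affecting bipartiteness. Start with a weak IASI of $G-E'$ in which all vertices of $B$ receive distinct $2$-element labels and all vertices of $A$ receive distinct singletons; this is a weak IASI of $G-E'$ with no mono-indexed edges, since every edge of $G-E'$ has one endpoint in $A$. Now reinsert the edges of $E'$ one at a time. By Theorem \ref{T-SB2}, each reinsertion yields a weak IASI provided either the graph remains bipartite or the reinserted edge is mono-indexed; after an appropriate relabelling of endpoints of $E'$-edges to singletons, each edge of $E'$ contributes at most one mono-indexed edge, which would give $\varphi(G)\le |E'|$.

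I expect the main obstacle to be the upper-bound construction: an $E'$-edge inside $B$ requires both of its endpoints (initially $2$-element labelled) to be relabelled as singletons, which could incidentally turn other, non-$E'$ edges incident to those vertices into mono-indexed edges. Ensuring that these side-effects do not inflate the count beyond $|E'|$ is the delicate part; it hinges on choosing the bipartition $(A,B)$ so that any vertex in $B$ incident to an $E'$-edge can be safely demoted to a singleton, which in turn requires a careful interaction between the structure of $E'$ and that of the bipartite graph $G-E'$.
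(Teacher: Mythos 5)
Your lower bound $\varphi(G)\ge |E'|$ is sound and is in fact more careful than the paper's own argument: the observation that $|f(u)+f(v)|\ge |f(u)|+|f(v)|-1$ forces the non-singleton vertices of any weak IASI to form an independent set, so that deleting the mono-indexed edges of an optimal labelling leaves a bipartite graph, is exactly the right mechanism, and it immediately yields $|E'|\le\varphi(G)$ once ``minimal'' is read as ``of minimum cardinality.'' (That reading is the only tenable one: an inclusion-minimal $E'$ can be strictly larger than a minimum one, e.g.\ the three edges of a triangle of $K_4$ versus a perfect matching.)

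The obstacle you flag in the upper bound is not a technicality to be smoothed over; it is fatal, because the inequality $\varphi(G)\le |E'|$ is false in general and hence so is the theorem as stated. Take $G=K_4$: a minimum $E'$ is a perfect matching, so $|E'|=2$, yet in any weak IASI of $K_4$ at most one vertex can carry a non-singleton label (the non-singleton vertices are independent and $\alpha(K_4)=1$), so the other three vertices span three mono-indexed edges and $\varphi(K_4)=3$. The side-effect you identified --- demoting a $B$-vertex incident to an $E'$-edge to a singleton turns every $G-E'$ edge at that vertex into a mono-indexed edge --- is precisely what inflates the count here. The paper's proof contains the same unexamined step: it labels $G-E'$ with no mono-indexed edges and then invokes Theorem~\ref{T-SB2} to conclude that each $e'_i$ ``is'' mono-indexed, silently assuming the endpoints of the $e'_i$ can be made singletons without creating new mono-indexed edges elsewhere. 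The statement is salvageable only under an extra hypothesis, for instance that $G-E'$ admits a bipartition $(A,B)$ with every edge of $E'$ lying inside $A$ (then labelling $A$ by singletons and $B$ by non-singletons makes exactly the edges of $E'$ mono-indexed, and your lower bound gives equality); this holds for odd cycles and for the named graphs treated later in the paper, but not for $K_4$. Your two-inequality framework is the right one, and it isolates exactly where that missing hypothesis must enter.
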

\begin{proof}
Let $G$ be a non-bipartite weak IASI graph. Let $E'=\{e'_1,e'_2, \ldots, e'_r\}$ be a minimal set of edges of $G$ such that $G_r=G-E'$ is bipartite. Then, $G_r=G-E'$ is a maximal bipartite subgraph of $G$. By Theorem \ref{T-SB1}, $G_r$ admits a weak IASI without any mono-indexed edge. Now, $G_{r-1}=G_n\cup \{e'_i\}, 1\le i \le r$ is a non-bipartite graph. Then, by Theorem \ref{T-SB2}, $e'_i$ is a mono-indexed edge in $G$, for $1\le i \le r$. Hence, in each edge in $E'$ must be mono-indexed. Therefore, $\varphi(G)=|E'|$.
\end{proof}

The size of the maximal bipartite subgraph of a graph $G$ is denoted by $b(G)$. Then, as a consequence of Theorem \ref{T-SNGB1}, we have

\begin{corollary}\label{C-T-SNGB1}
For a graph $G$, the size of its maximal bipartite subgraph is $b(G)=|E(G)|-\varphi(G)$.
\end{corollary}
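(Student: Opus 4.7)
The plan is to extract the corollary as an immediate bookkeeping consequence of Theorem \ref{T-SNGB1}. The theorem identifies $\varphi(G)$ with the cardinality of a minimal set $E'\subseteq E(G)$ whose deletion leaves a bipartite graph. Such a minimal $E'$ is precisely the complement (within $E(G)$) of the edge set of a \emph{maximal} bipartite subgraph of $G$, since any larger bipartite subgraph would correspond to a strictly smaller set of removed edges and would contradict minimality. Thus I would first argue the bijective correspondence between minimal bipartizing edge sets and maximal bipartite subgraphs, in both directions.

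Concretely, I would proceed as follows. First, take the set $E'$ given by Theorem \ref{T-SNGB1}; then $H = G - E'$ is bipartite with $|E(H)| = |E(G)| - |E'| = |E(G)| - \varphi(G)$, which gives the inequality $b(G) \ge |E(G)| - \varphi(G)$. Second, for the reverse inequality, let $H^{\ast}$ be a maximal bipartite subgraph of $G$ with $|E(H^{\ast})| = b(G)$, and set $E'' = E(G) \setminus E(H^{\ast})$. Then $G - E''$ is bipartite, and $E''$ must be minimal with this property (otherwise a proper subset $E''' \subsetneq E''$ would yield a bipartite subgraph $G - E'''$ strictly larger than $H^{\ast}$, contradicting the maximality of $H^{\ast}$). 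By Theorem \ref{T-SNGB1}, $|E''| = \varphi(G)$, so $b(G) = |E(G)| - |E''| = |E(G)| - \varphi(G)$.

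There is no real obstacle here; the only point that requires a small argument rather than a one-line citation is the minimality-maximality duality used in the reverse inequality, and that is purely combinatorial. One should be slightly careful that $b(G)$ is understood as the maximum number of edges over all spanning bipartite subgraphs (equivalently, all bipartite subgraphs on $V(G)$), so that edge-removal and subgraph-selection are genuine inverses; with that convention the corollary drops out immediately.
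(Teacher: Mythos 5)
Your proof is correct and follows essentially the same route as the paper: identify the complement of a minimal bipartizing edge set $E'$ with a maximal bipartite subgraph and apply Theorem \ref{T-SNGB1} to get $|E'|=\varphi(G)$. You are somewhat more careful than the paper in spelling out the minimality--maximality duality and both inequalities, which the paper's one-line proof takes for granted, but the underlying argument is the same.
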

\begin{proof}
Let $G$ be a non-bipartite graph with the edge set $E$. Let $E'\subset E$ such that $G-E'$ is the maximal bipartite subgraph of $G$. Then, $b(G)=|E(G-E')|=|E|-|E'|$. By Theorem \ref{T-SNGB1}, we have $|E'|=\varphi(G)$. Therefore, $b(G)=|E(G)|-\varphi(G)$.
\end{proof}

The most interesting question in this context is whether the sparing number of a graph is related to other parameters like chromatic number, matching number etc. The following results establish some relations between these parameters of certain graph classes.

Let $G$ be a $k$-chromatic graph with colors $\mathtt{c}_i, 1\le i \le k$.  Then, $G$ is a $k$-partite graph in which the $i$-th partition, denoted by $\mathcal{C}_i, 1\le i \le k$, consists of all the vertices of $G$ which has the color $\mathtt{c}_i$. The set $\mathcal{C}_i$ is called the {\em $i$-th color class}. Hence we have, 

\begin{theorem}
The sparing number of a $k$-chromatic graph $G$ is the total number of vertices in all the color classes of $G$ except the two maximal color classes.
\end{theorem}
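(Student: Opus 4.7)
The plan is to invoke Corollary~\ref{C-T-SNGB1}, which reduces the computation of $\varphi(G)$ to finding a largest bipartite subgraph of $G$. Fix a $k$-coloring with color classes $\mathcal{C}_1,\dots,\mathcal{C}_k$, re-indexed so that $\mathcal{C}_1$ and $\mathcal{C}_2$ are the two classes of maximum cardinality. Since each $\mathcal{C}_i$ is independent, the induced subgraph $G[\mathcal{C}_1\cup\mathcal{C}_2]$ is automatically bipartite and will serve as the ``bipartite backbone'' of the weak IASI we build.

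For the upper bound on $\varphi(G)$, I would construct an explicit weak IASI as follows. Assign distinct two-element labels to the vertices of $\mathcal{C}_1\cup\mathcal{C}_2$ using arithmetic-progression sets so that every pairwise sumset has cardinality exactly two; this is the canonical construction realizing the lower bound $|A+B|\ge\max(|A|,|B|)$ of Lemma~\ref{L-Card}, and it ensures that no edge meeting $\mathcal{C}_1\cup\mathcal{C}_2$ is mono-indexed. Then give each vertex of $\bigcup_{i\ge 3}\mathcal{C}_i$ a distinct singleton label. The only mono-indexed edges are then those with both endpoints in $\bigcup_{i\ge 3}\mathcal{C}_i$, and pairing each such vertex with a distinct mono-indexed edge it ``owns'' would yield $\varphi(G)\le \sum_{i=3}^{k}|\mathcal{C}_i|$.

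For the matching lower bound I would appeal to Theorem~\ref{T-SB1}: in any weak IASI of $G$, the subgraph on the non-mono-indexed edges must be bipartite. Intersecting any such bipartition with the given $k$-coloring, the two parts can absorb at most the two maximal color classes without creating a mono-indexed edge, so every vertex outside $\mathcal{C}_1\cup\mathcal{C}_2$ must contribute at least one distinct mono-indexed edge, giving $\varphi(G)\ge \sum_{i=3}^{k}|\mathcal{C}_i|$.

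The main obstacle is the combinatorial bookkeeping that forces the counts to line up exactly. On the upper-bound side one needs a genuine one-to-one correspondence between the extra vertices in $\bigcup_{i\ge 3}\mathcal{C}_i$ and the mono-indexed edges they produce; on the lower-bound side one must rule out labelings that amortize a single mono-indexed edge across several extra vertices. Making sure the pairing argument is independent of the particular optimal $k$-coloring chosen is the step I expect to be hardest.
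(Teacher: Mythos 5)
Your construction breaks at the very first step of the upper bound. You propose to give \emph{every} vertex of $\mathcal{C}_1\cup\mathcal{C}_2$ a two-element label chosen so that all pairwise sumsets have cardinality exactly two. No such choice exists: if $A=\{a_1<a_2\}$ and $B=\{b_1<b_2\}$, then $a_1+b_1<a_1+b_2<a_2+b_2$ are three distinct elements of $A+B$, so $|A+B|\ge 3>\max(|A|,|B|)$. The lower bound of Lemma~\ref{L-Card} is never attained by a pair of non-singleton sets; in fact every edge of a weak IASI graph must have at least one end vertex with a singleton label. Consequently, under your labeling every edge between $\mathcal{C}_1$ and $\mathcal{C}_2$ violates the weak IASI condition and the whole ``bipartite backbone'' collapses. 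The paper's proof avoids this by labeling $\mathcal{C}_1$ with singletons and only $\mathcal{C}_2$ with non-singleton sets.

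The second, independent, gap is the conflation of mono-indexed \emph{vertices} with mono-indexed \emph{edges}. Once the labeling is repaired, the mono-indexed edges are exactly those with both ends carrying singleton labels, i.e.\ both ends in $\mathcal{C}_1\cup\bigcup_{i\ge 3}\mathcal{C}_i$; there is no reason for these to be in bijection with the vertices of $\bigcup_{i\ge 3}\mathcal{C}_i$, and the ``pairing each such vertex with a distinct mono-indexed edge it owns'' cannot be carried out in general. Take $K_5$: all five color classes are singletons, so your formula (and the theorem) predicts $3$, yet in any weak IASI the non-singleton-labeled vertices form an independent set, hence at most one vertex of $K_5$ escapes a singleton label and at least $\binom{4}{2}=6$ edges are mono-indexed. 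The same example shows that the matching lower bound you sketch cannot be completed. (To be fair, the paper's own proof commits the same vertex/edge conflation in its last sentence, and the statement as printed already fails for $K_5$; but your route adds the further, fatal, sumset error, so neither half of your two-sided bound can be salvaged as written.)
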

\begin{proof}
Let $\mathtt{c}_1, \mathtt{c}_2, \mathtt{c}_3, \ldots, \mathtt{c}_k$ be the $k$ colors that are used for coloring the vertices $G$ and let $\mathcal{C}_i$ be the color class in which the vertices are assigned the color $\mathtt{c}_i$. Without loss of generality, let $\mathcal{C}_i, 1 \le i \le k$ be in the descending order of their cardinality. Label all the vertices in the color class $\mathcal{C}_1$ by distinct singleton sets of non-negative integers and label all the vertices in the color class $\mathcal{C}_2$ by non-singleton singleton sets of non-negative integers. Since $\mathcal{C}_1$ and $\mathcal{C}_2$ are maximal color classes, each vertex, say $v$, in other color classes must be adjacent to at least one vertex of $\mathcal{C}_1$ and at least one vertex of $\mathcal{C}_2$. Since $G$ is a weak IASI graph, $v$ can be labeled only by a singleton set, which is not used for labeling before. Hence, the sparing number of $G$ is the total number of vertices in all the color classes of $G$ except the two maximal color classes.
\end{proof}


Now, recall the following theorem proved in \cite{GS3}.

\begin{theorem}\label{T-SNC1}
\cite{GS3} The sparing number of an odd cycle is $1$ and that of a bipartite graph is $0$.
\end{theorem}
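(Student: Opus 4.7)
The plan is to prove the two claims separately, with the bipartite case providing the main construction that the odd-cycle case then inherits.

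For the bipartite part, I would take a bipartition $(V_1,V_2)$ of $G$ and build an explicit weak IASI. The idea is to assign to the vertices of $V_1$ distinct singleton subsets of $\mathbb{N}_0$ and to the vertices of $V_2$ distinct two-element subsets of $\mathbb{N}_0$ all sharing a common ``difference'' $d$, i.e.\ sets of the form $\{b_j,b_j+d\}$. For any edge $uv$ with $u\in V_1$ and $v\in V_2$ one then gets $|g_f(uv)|=|f(u)+f(v)|=2=\max(|f(u)|,|f(v)|)$, so no edge is mono-indexed. The one thing to check is injectivity of $g_f$: since edge labels are of the form $\{a_i+b_j,a_i+b_j+d\}$, distinctness reduces to distinctness of the scalars $a_i+b_j$ for the chosen edges, which is arranged by picking the $a_i$'s and $b_j$'s to be, say, powers of a sufficiently large base (or by a Sidon-type choice). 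Thus $\varphi(G)=0$.

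For the odd cycle $C_{2k+1}=v_1v_2\cdots v_{2k+1}v_1$, the lower bound $\varphi(C_{2k+1})\ge 1$ is immediate from Theorem~\ref{T-SB1}, since $C_{2k+1}$ is connected and non-bipartite. For the upper bound I would show a construction giving exactly one mono-indexed edge. Delete the edge $v_{2k+1}v_1$; the resulting path $P_{2k+1}$ is bipartite, with one part consisting of $\{v_1,v_3,\dots,v_{2k+1}\}$ and the other of $\{v_2,v_4,\dots,v_{2k}\}$. Apply the construction above to label the odd-indexed vertices with distinct singletons and the even-indexed vertices with distinct two-element sets of common difference $d$; then all $2k$ edges of the path are non-mono-indexed. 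Reinstating the edge $v_{2k+1}v_1$ joins two singleton-labeled vertices, forcing exactly one mono-indexed edge, whose label $\{a_{v_1}+a_{v_{2k+1}}\}$ I arrange (by choice of the scalars) to differ from all other edge labels.

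The main obstacle is really a bookkeeping one: ensuring simultaneously that $f$ is injective on vertices, that $g_f$ is injective on edges, and that the single mono-indexed edge in the cycle case does not accidentally coincide with another edge label. All three conditions can be met by choosing the $a_i$'s and $b_j$'s from a Sidon set (or, more concretely, as distinct powers of a large integer), so that all pairwise sums $a_i+b_j$ and the extra sum $a_{v_1}+a_{v_{2k+1}}$ are distinct. Once injectivity is secured, the weak IASI identity $|g_f(uv)|=\max(|f(u)|,|f(v)|)$ holds by construction, and the count of mono-indexed edges is $0$ for bipartite $G$ and $1$ for $C_{2k+1}$, giving the stated sparing numbers.
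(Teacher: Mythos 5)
Your proposal is correct and complete; note that the paper itself only cites this result from \cite{GS3} without reproducing a proof, and your argument is the standard one: singletons on one part and two-element sets of fixed common difference on the other for the bipartite case, plus the lower bound from Theorem~\ref{T-SB1} and a one-edge-deleted path construction for the odd cycle. The only simplification available is that your final worry is vacuous: the mono-indexed edge's singleton label can never coincide with any of the two-element edge labels for cardinality reasons, so the Sidon-type choice is needed only to keep the two-element edge labels pairwise distinct.
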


Due to Theorem \ref{T-SNC1}, we have,

\begin{theorem}
If $G$ is path or cycle, then $\chi(G)-\varphi(G)=2$, where $\chi(G)$ is the chromatic number of $G$.
\end{theorem}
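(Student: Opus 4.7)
The plan is to do a straightforward case analysis on the three possibilities: $G$ is a path, $G$ is an even cycle, or $G$ is an odd cycle. In each case I would compute $\chi(G)$ and $\varphi(G)$ separately and verify that their difference is $2$.

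First, if $G$ is a path (on at least two vertices) or an even cycle, then $G$ is bipartite, so $\chi(G)=2$ by a standard graph-theoretic fact, and $\varphi(G)=0$ by Theorem \ref{T-SNC1}. Hence $\chi(G)-\varphi(G)=2-0=2$ in these two subcases.

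Next, if $G$ is an odd cycle, then $G$ is not bipartite and is easily seen to be $3$-chromatic, so $\chi(G)=3$. The sparing number $\varphi(G)=1$ is given directly by Theorem \ref{T-SNC1}. Therefore $\chi(G)-\varphi(G)=3-1=2$.

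I do not expect any real obstacle here: both the chromatic numbers are classical, and both the sparing numbers are supplied by Theorem \ref{T-SNC1}. The only thing that needs a brief comment is the degenerate case of a single vertex (a trivial path), which is usually excluded by the standing assumption that graphs considered have no isolated vertices; with $n\ge 2$ vertices the formula $\chi(P_n)=2$ holds without exception. The proof therefore reduces to assembling the three subcases into a single statement.
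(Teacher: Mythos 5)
Your proof is correct and follows essentially the same case analysis as the paper: split into path/even cycle (where $\chi=2$, $\varphi=0$) and odd cycle (where $\chi=3$, $\varphi=1$), citing Theorem \ref{T-SNC1} for the sparing numbers. Your added remark about excluding the trivial one-vertex path is a reasonable small clarification but does not change the argument.
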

\begin{proof}
If $G$ is path or an even cycle, then $\chi(G)=2$ and by Theorem \ref{T-SNC1}, $\varphi(G)=0$. Hence, $\chi(G)-\varphi(G)=2$. If $G$ is an odd cycle, then $\chi(G)=3$ and by Theorem \ref{T-SNC1}, $\varphi(G)=1$. Hence, $\chi(G)-\varphi(G)=2$. This completes the proof.
\end{proof}

 A {\em matching} $M$ in a given graph $G$ is a set of pairwise non-adjacent edges. A {\em maximum matching} is a matching that contains the largest possible number of edges. The matching number $\nu(G)$ of $G$ is the size of a maximum matching. The following theorem establishes the relation between the sparing number and the matching number paths and cycles.

\begin{theorem}\label{T-SNMNCP}
If $G$ is a path or a cycle on $n$ vertices, then $\varphi(G)=\lceil \frac{n}{2} \rceil-\nu(G)$, where $\nu(G)$ is the matching number of $G$. 
\end{theorem}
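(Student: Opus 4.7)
The plan is to establish the identity by a direct case analysis on whether $G$ is a path or a cycle and on the parity of $n$, using Theorem~\ref{T-SNC1} to read off $\varphi(G)$ in each case and computing $\nu(G)$ from the explicit structure of $P_n$ and $C_n$.

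First I would record the standard evaluation $\nu(P_n)=\nu(C_n)=\lfloor n/2\rfloor$. The upper bound is immediate, since any matching in an $n$-vertex graph contains at most $\lfloor n/2\rfloor$ pairwise disjoint edges, and a matching attaining this bound is obtained by selecting every other edge along the path (respectively, around the cycle) starting from a fixed endpoint or edge. As a consequence, the right-hand side of the asserted identity reduces in every case to $\lceil n/2\rceil-\lfloor n/2\rfloor$, which equals $0$ when $n$ is even and $1$ when $n$ is odd.

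I would then split into the four sub-cases given by $\{\text{path},\text{cycle}\}\times\{n \text{ even},n \text{ odd}\}$ and compare the two sides. When $n$ is even, both $P_n$ and $C_n$ are bipartite, so Theorem~\ref{T-SNC1} yields $\varphi(G)=0$, matching the right-hand side. When $G=C_n$ with $n$ odd, Theorem~\ref{T-SNC1} gives $\varphi(G)=1$, which agrees with $\tfrac{n+1}{2}-\tfrac{n-1}{2}=1$. Each of these three sub-cases is a routine verification once the matching-number computation is in hand.

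The main obstacle is the remaining sub-case of an odd path. Here $P_n$ is bipartite, so Theorem~\ref{T-SNC1} forces $\varphi(P_n)=0$, yet the right-hand side as computed above evaluates to $1$. I expect this is the step that requires the most care: either one reinterprets $\lceil n/2\rceil$ as $\lfloor n/2\rfloor$ in the path case, or one replaces the right-hand side by an expression such as $n-2\nu(G)$ together with an indicator distinguishing odd cycles from bipartite paths. Reconciling the odd-path sub-case with the stated formula, and isolating the precise convention under which the identity holds uniformly for paths and cycles of every parity, is the principal difficulty of the proof; the rest of the argument is essentially bookkeeping.
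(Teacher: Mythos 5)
Your case analysis follows essentially the same route as the paper's own proof: read off $\varphi(G)$ from Theorem \ref{T-SNC1} and compare it with an explicit computation of $\nu(G)$ in each of the four parity cases. The three cases you verify (even path, even cycle, odd cycle) agree with the paper. The difference lies in the odd-path case, and there your diagnosis is correct while the paper's is not: for a path on $n$ vertices with $n$ odd, the matching number is $\lfloor n/2\rfloor=(n-1)/2$, so the right-hand side of the claimed identity equals $1$ while $\varphi(P_n)=0$ since paths are bipartite, and the identity fails. The paper's proof instead asserts that an ``odd'' path has $\nu(G)=(n+1)/2$; that is the matching number of a path with $n$ \emph{edges} (hence $n+1$ vertices), so the proof silently switches from counting vertices to counting edges in the path case. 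Under the literal hypothesis ``on $n$ vertices'' the theorem is false for odd-order paths; the clean repairs are either to restrict the formula to cycles (where it holds for both parities) and record separately that $\varphi=0$ for all paths, or to re-index paths by their length. In short, there is no gap in your reasoning: the obstacle you isolate in the final paragraph is a genuine defect of the statement and of the paper's proof, not a missing idea on your part, and your suggestion that the identity only holds uniformly under a changed convention for $n$ is exactly what the paper's computation implicitly assumes.
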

\begin{proof}
If $G$ is a path, then by Theorem \ref{T-SNC1}, $\varphi(G)=0$. If $G$ is even, then $\nu(G)=\frac{n}{2}$ and if $G$ is odd, then $\nu(G)=\frac{(n+1)}{2}$. Therefore, the matching number of $G$ is $\nu=\lceil \frac{n}{2} \rceil$. Therefore, $\varphi(G)=\lceil \frac{n}{2} \rceil-\nu(G)$.

If $G$ is an even cycle, then by Theorem \ref{T-SNC1}, $\varphi(G)=0$ and the matching number of $G$ is $\nu(G)=\frac{n}{2}$. Here, $\frac{n}{2}-\nu(G)=\varphi(G)$.

If $G$ is an odd cycle, then by Theorem \ref{T-SNC1}, $\varphi(G)=1$ and the matching number of $G$ is $\nu(G)=\frac{n-1}{2}$. Here, $\lceil {\frac{n}{2}} \rceil-\nu(G)=1=\varphi(G)$.

This completes the proof.
\end{proof}

\noindent Now, recall the following theorem.

\begin{theorem}\label{T-SNUG}
\cite{GS4} Let $G_1$ and $G_2$ be two weak IASI graphs. Then, $\varphi(G_1\cup G_2) = \varphi(G_1)+\varphi(G_2)-\varphi(G_1\cap G_2)$.
\end{theorem}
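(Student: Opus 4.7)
The plan is to reduce the identity to one about maximal bipartite subgraphs using Corollary \ref{C-T-SNGB1}. That corollary gives $\varphi(G) = |E(G)| - b(G)$ for every weak IASI graph $G$. Combined with the elementary edge-set identity $|E(G_1 \cup G_2)| = |E(G_1)| + |E(G_2)| - |E(G_1 \cap G_2)|$, the claim becomes equivalent to the modularity statement
\[
b(G_1 \cup G_2) + b(G_1 \cap G_2) = b(G_1) + b(G_2),
\]
so the work moves to this identity for sizes of maximal bipartite subgraphs of the four graphs in play.

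An alternative, more hands-on route is to reason directly with mono-indexed edges. For the upper bound, I would first fix an optimal weak IASI on $G_1 \cap G_2$, extend it separately to an optimal weak IASI on $G_1$ and on $G_2$, and glue these extensions along the shared vertices to produce a weak IASI $f$ of $G_1 \cup G_2$. Writing $M_i$ for the mono-indexed edge set of the restriction of $f$ to $G_i$, one has $M_1 \cap M_2$ equal to the mono-indexed edges of the restriction to $G_1 \cap G_2$, and hence by set inclusion-exclusion $|M_1 \cup M_2| = |M_1| + |M_2| - |M_1 \cap M_2| = \varphi(G_1) + \varphi(G_2) - \varphi(G_1 \cap G_2)$, yielding the upper bound. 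For the reverse inequality, start with an optimal weak IASI of $G_1 \cup G_2$, restrict it to the three subgraphs, and apply the same inclusion-exclusion to the induced mono-indexed edge sets.

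The main obstacle is precisely the justification that the three restrictions of an optimal IASI on $G_1 \cup G_2$ are themselves optimal on $G_1$, $G_2$, and $G_1 \cap G_2$ — equivalently, the modularity of $b$ displayed above. This optimality-preservation claim is delicate, since a priori an IASI that minimizes mono-indexed edges globally need not minimize them on each piece. I would attempt a local exchange argument: if some restriction were strictly suboptimal, relabel the vertices that are exclusive to that piece using a witness labeling from Theorem \ref{T-SNGB1}, and verify that this relabeling remains consistent with the shared labels along $V(G_1) \cap V(G_2)$, producing an IASI of $G_1 \cup G_2$ with fewer mono-indexed edges and contradicting optimality. Making that compatibility across the common vertices precise, using the characterization of weak IASIs via bipartite spanning subgraphs, is where the real technical work lies.
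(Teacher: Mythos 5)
The paper itself offers no proof of this identity: it is recalled verbatim from \cite{GS4}, so there is nothing internal to compare your argument against, and the proposal has to stand on its own. It does not close, and the reason is not a missing technical step but that the statement is false as written. Your first reduction is sound as far as it goes: by Corollary \ref{C-T-SNGB1} and inclusion--exclusion on edge sets, the identity is equivalent to the modularity statement $b(G_1\cup G_2)+b(G_1\cap G_2)=b(G_1)+b(G_2)$. But take $G_1$ and $G_2$ to be two triangles sharing an edge, so that $G_1\cup G_2$ is $K_4$ minus an edge. Then $\varphi(G_1)=\varphi(G_2)=1$ and $\varphi(G_1\cap G_2)=0$ (the intersection is a single edge, which is bipartite), so the right-hand side is $2$; but deleting the one shared edge already destroys both triangles, so by Theorem \ref{T-SNGB1} we get $\varphi(G_1\cup G_2)=1$. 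Even the upper bound that your gluing construction is meant to deliver can fail: let $G_1$ be the path $u$--$v$--$w$ and $G_2$ the single edge $uw$. All of $\varphi(G_1)$, $\varphi(G_2)$, $\varphi(G_1\cap G_2)$ are $0$, yet $G_1\cup G_2$ is a triangle with sparing number $1$. The concrete failure in the gluing is that optimal weak IASIs of $G_1$ and of $G_2$ agreeing on the common vertices need not exist: optimality on the path wants both $u$ and $w$ to carry non-singleton labels, which is inadmissible on the edge $uw$ of $G_2$.

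So the step you flagged as ``where the real technical work lies'' --- that restrictions of an optimal labelling remain optimal, equivalently the modularity of $b$ --- is not merely delicate; it is false, and no local exchange argument can repair it, because the target identity itself fails for general $G_1,G_2$. Your instinct to distrust exactly that step was correct, and the right conclusion to draw is that the theorem requires additional hypotheses on how $G_1$ and $G_2$ overlap. The only use made of it in this paper, Theorem \ref{T-SNFUOC}, concerns edge-disjoint odd cycles, where $\varphi(G_1\cap G_2)=0$ and one can argue directly that each odd cycle forces its own deleted edge; note that even plain edge-disjointness does not suffice in general, as the path-plus-chord example above shows.
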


As a consequence of Theorem \ref{T-SNMNCP} and Theorem \ref{T-SNUG}, we have

\begin{theorem}\label{T-SNFUOC}
If $G$ can be decomposed into finite number of odd cycles, then $\varphi(G)=\displaystyle{\sum_{i=1}^m{\lceil{\frac{n_i}{2}}\rceil}} - \nu(G)$, where $m$ is the number of edge-disjoint cycles and $n_i$ is the size of the cycle $C_i$ in $G$.
\end{theorem}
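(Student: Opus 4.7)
The plan is to reduce the statement to Theorem \ref{T-SNMNCP} applied to each cycle in the decomposition, using Theorem \ref{T-SNUG} to glue the cycles together. First I would compute $\varphi(G)$ by induction on the number $m$ of odd cycles in the decomposition. The base case $m=1$ is just Theorem \ref{T-SNMNCP} (equivalently Theorem \ref{T-SNC1}) which yields $\varphi(C_{n_1})=1$. For the inductive step, I would write $G = H \cup C_m$ where $H = C_1 \cup C_2 \cup \cdots \cup C_{m-1}$, and apply Theorem \ref{T-SNUG}: since the decomposition is edge-disjoint, $H\cap C_m$ contains no edges, so $\varphi(H\cap C_m)=0$, and the induction hypothesis gives $\varphi(G) = \varphi(H) + \varphi(C_m) = (m-1) + 1 = m$.

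Next, I would evaluate the right-hand side. For each odd cycle $C_i$ of size $n_i$, Theorem \ref{T-SNMNCP} says $\varphi(C_i) = \lceil n_i/2\rceil - \nu(C_i) = 1$, which in particular tells us that $\lceil n_i/2\rceil = \nu(C_i) + 1 = \tfrac{n_i-1}{2}+1$. Summing over $i$ gives
\[
\sum_{i=1}^{m} \left\lceil \tfrac{n_i}{2}\right\rceil \;=\; \sum_{i=1}^{m} \nu(C_i) \;+\; m.
\]
Since the decomposition partitions the edges of $G$ into vertex-disjoint odd cycles (the intended setting, as otherwise matchings in different cycles can interfere at shared vertices), a maximum matching of $G$ is obtained by taking a maximum matching in each cycle independently, giving $\nu(G)=\sum_{i=1}^m \nu(C_i) = \sum_{i=1}^m \tfrac{n_i-1}{2}$. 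Substituting, $\sum_i \lceil n_i/2\rceil - \nu(G) = m = \varphi(G)$, which is the claim.

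The main obstacle I expect is the matching-number step: Theorem \ref{T-SNUG} only requires edge-disjointness (so the first half of the proof goes through verbatim for any edge-decomposition), but the identity $\nu(G)=\sum_i \nu(C_i)$ can fail if two cycles share a vertex, since a shared vertex can be saturated by only one matching edge. So the proof either needs to restrict to vertex-disjoint cycle decompositions, or it needs an independent argument bounding $\nu(G)$ from both sides using $\varphi(G)=m$ and $|E(G)|=\sum n_i$. The cleanest write-up is to state the hypothesis as a vertex-disjoint decomposition and simply combine the two computations above.
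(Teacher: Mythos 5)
Your argument is essentially the paper's own proof: apply Theorem \ref{T-SNMNCP} to each cycle, glue the pieces with Theorem \ref{T-SNUG} using edge-disjointness (your induction and the paper's direct summation are the same computation), and then convert $\sum_i\nu(C_i)$ into $\nu(G)$. The one point where you diverge is exactly the point you flag: the paper simply asserts $\nu(G)=\sum_{i=1}^m\nu(C_i)$ as Equation \ref{myeq1} with no justification, and your worry is well founded --- the identity genuinely fails for edge-disjoint decompositions that are not vertex-disjoint (e.g.\ $K_5$ decomposed into two Hamiltonian $5$-cycles has $\sum_i\nu(C_i)=4$ but $\nu(K_5)=2$), so only the inequality $\nu(G)\le\sum_i\nu(C_i)$ comes for free. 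Your proposed repair, strengthening the hypothesis to a vertex-disjoint decomposition (so that maximum matchings of the cycles combine into one of $G$), is the correct way to make the statement and proof airtight; as stated, both your proof and the paper's carry this unproved step.
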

\begin{proof}
Let $G$ can be decomposed into finite number of odd cycles. Let $C_1,C_2,C_3,\ldots,C_m$ be the distinct edge disjoint odd cycles in $G$ so that $\displaystyle{\bigcup_{i=1}^m C_i}=G$. Then, 
\begin{equation}
\nu(G)=\displaystyle{\sum_{i=1}^m{\nu(C_i)}} \label{myeq1}
\end{equation}
 
By Theorem \ref{T-SNMNCP}, $\varphi(C_i)=\lceil{\frac{n_i}{2}}\rceil - \nu(C_i)$. Since, all $C_i, 1\le i \le m$ are edge-disjoint, by Theorem \ref{T-SNUG}, 
	\begin{eqnarray*}
	\varphi(G) & = & \displaystyle{\sum_{i=1}^m{\varphi(C_i)}}\\
	 & = & \displaystyle{\sum_{i=1}^m{[\lceil{\frac{n_i}{2}}\rceil - \nu(C_i)]}}\\
	 & = & \displaystyle{\sum_{i=1}^m{\lceil{\frac{n_i}{2}}\rceil}} - \displaystyle{\sum_{i=1}^m{\nu(C_i)}}\\
	 & = & \displaystyle{\sum_{i=1}^m{\lceil{\frac{n_i}{2}}\rceil}} - \nu(G)
	\end{eqnarray*}
This completes the proof.
\end{proof}

\begin{remark}\label{R-SNC1}{\rm
We observe that Theorem \ref{T-SNFUOC}, does not hold for edge disjoint bipartite graphs since Equation \ref{myeq1} does not hold for edge disjoint even cycles. It can also be verified that Equation \ref{myeq1} will not hold for those graphs, which can be decomposed into edge disjoint cycles, having more than one even cycle. }
\end{remark}

Now, recall the well-known theorem on Eulerian graphs.

\begin{theorem}\label{T-EG1}
\cite{CZ} A graph $G$ is Eulerian if and only if it can be decomposed into edge disjoint cycles.
\end{theorem}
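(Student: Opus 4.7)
The plan is to prove both directions of the biconditional separately. For convenience, I interpret \emph{Eulerian} in the sense used in the preceding theorem, namely that $G$ is connected and every vertex has even degree (equivalently, $G$ admits a closed Eulerian trail). The backward implication is essentially a degree count, while the forward implication is an inductive decomposition argument based on extracting a cycle from a graph in which every vertex has even degree.

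For the backward direction, I would assume that $E(G)$ is partitioned into edge-disjoint cycles $C_1,C_2,\ldots,C_k$. For any vertex $v\in V(G)$, its degree in $G$ is $\deg_G(v)=\sum_{i=1}^k \deg_{C_i}(v)$, and each summand is either $0$ or $2$ because $C_i$ is a cycle. Hence $\deg_G(v)$ is even for every $v$, which is the degree condition needed for $G$ to be Eulerian (connectedness is assumed as part of the hypothesis on $G$ in the surrounding discussion, or can be added here explicitly).

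For the forward direction, I would proceed by induction on $|E(G)|$. The base case $|E(G)|=0$ is vacuous, the empty set of cycles being a valid decomposition. For the inductive step, the key sub-lemma I need is: \emph{any graph with minimum degree at least $2$ contains a cycle}. This can be shown by taking a maximal path $v_0v_1\cdots v_\ell$; since $\deg(v_\ell)\ge 2$, the vertex $v_\ell$ has a neighbor $v_j$ with $j<\ell-1$ on the path, yielding the cycle $v_jv_{j+1}\cdots v_\ell v_j$. Applying this to any nontrivial component of $G$ (every vertex of which has even, hence at least $2$, degree), I extract a cycle $C$, delete its edges, and observe that in $G-E(C)$ every vertex still has even degree (each vertex of $C$ loses exactly $2$ from its degree). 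The resulting graph has fewer edges, so by the induction hypothesis applied to each of its components, it decomposes into edge-disjoint cycles; adjoining $C$ completes the decomposition of $G$.

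The only genuine obstacle is the cycle-extraction sub-lemma and the bookkeeping to run induction on a graph that may become disconnected after removing $E(C)$. I would handle the latter by stating the inductive hypothesis for all graphs with even degrees (not just connected ones), which is in fact the more general Veblen-style form of the statement and is exactly what is used in Theorem~\ref{T-SNFUOC} via Theorem~\ref{T-SNUG}. With that framing, both directions are clean and the proof is complete.
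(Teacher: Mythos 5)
Your proof is correct, but there is nothing in the paper to compare it against: Theorem~\ref{T-EG1} is quoted from the reference \cite{CZ} and the paper supplies no proof of its own. What you have written is the standard textbook argument (essentially Veblen's theorem): the backward direction by counting degree contributions of $0$ or $2$ from each cycle, and the forward direction by induction on $|E(G)|$, using the sub-lemma that minimum degree at least $2$ forces a cycle (via a maximal path) and the observation that deleting a cycle's edges preserves the parity of every degree. Both steps are sound, and your decision to state the inductive hypothesis for all graphs with even degrees rather than only connected ones is exactly the right way to handle the components created by deleting $E(C)$. You are also right to flag the connectivity issue: as literally stated the equivalence fails for disconnected graphs (two vertex-disjoint triangles decompose into cycles but admit no Eulerian circuit), so the implicit standing assumption of connectedness is genuinely needed, and your explicit acknowledgement of it is a point in your proof's favor rather than a gap.
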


In view of Theorem \ref{T-EG1} and Remark \ref{R-SNC1}, Theorem \ref{T-SNFUOC} can be rewritten as follows.

\begin{theorem}
If $G$ is an Eulerian graph that has at most one even cycle then $\varphi(G)=\displaystyle{\sum_{i=1}^m{\lceil{\frac{n_i}{2}}\rceil}} - \nu(G)$, where $m$ is the number of edge-disjoint cycles and $n_i$ is the size of the cycle $C_i$ in $G$.
\end{theorem}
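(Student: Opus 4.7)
The plan is to combine Theorem \ref{T-EG1} with Theorem \ref{T-SNFUOC}, treating the possibly present even cycle as a summand that contributes zero to both sides of the desired identity. First, I would apply Theorem \ref{T-EG1} to decompose $G$ into edge-disjoint cycles $C_1,C_2,\ldots,C_m$. If all of them are odd, the conclusion is immediate from Theorem \ref{T-SNFUOC}, so I may assume exactly one cycle, say $C_1$, is even while $C_2,\ldots,C_m$ are odd.

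Since the cycles are edge-disjoint, $C_i\cap C_j$ contains no edges for $i\neq j$ and hence has sparing number $0$. Repeated application of Theorem \ref{T-SNUG} then gives $\varphi(G)=\sum_{i=1}^{m}\varphi(C_i)$. By Theorem \ref{T-SNC1}, the even cycle $C_1$ is bipartite and so $\varphi(C_1)=0$; and since $n_1$ is even, $\nu(C_1)=n_1/2=\lceil n_1/2\rceil$, so that $C_1$ contributes $0=\lceil n_1/2\rceil-\nu(C_1)$ to the target formula. For each odd cycle $C_i$ with $i\ge 2$, Theorem \ref{T-SNMNCP} gives $\varphi(C_i)=\lceil n_i/2\rceil-\nu(C_i)$. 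Summing,
\[
\varphi(G)=\sum_{i=1}^{m}\lceil n_i/2\rceil-\sum_{i=1}^{m}\nu(C_i).
\]

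It therefore remains to establish the analogue of Equation \ref{myeq1} in the present setting, namely $\nu(G)=\sum_{i=1}^{m}\nu(C_i)$. The inequality $\nu(G)\le\sum_{i}\nu(C_i)$ is immediate from edge-disjointness, since any matching $M$ of $G$ partitions as $\bigcup_i(M\cap E(C_i))$ with each piece a matching of the respective $C_i$. For the reverse inequality, I would select a maximum matching $N_i$ of each $C_i$ in a consistent fashion, so that no vertex of $G$ is covered twice: the even cycle $C_1$ necessarily saturates all of its vertices, whereas each odd cycle leaves exactly one vertex uncovered, and by the vertex-transitivity of cycles that uncovered vertex may be prescribed. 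Verifying that a globally consistent choice actually exists---so that each odd cycle's single uncovered vertex simultaneously absorbs all avoidance constraints coming from $C_1$ and from the other odd cycles sharing it---is the main obstacle, and it is precisely the step that collapses once a second even cycle is admitted (cf.\ Remark \ref{R-SNC1}). Once the $N_i$ are chosen in such a compatible manner, $\bigcup_{i=1}^{m}N_i$ is a matching of $G$ of size $\sum_i\nu(C_i)$, delivering the reverse inequality and completing the proof.
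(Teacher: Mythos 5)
Your route is the same one the paper intends: the paper in fact gives \emph{no} proof of this statement, presenting it only as Theorem \ref{T-SNFUOC} transplanted to Eulerian graphs via Theorem \ref{T-EG1} and Remark \ref{R-SNC1}, so in trying to fill in the details you are doing more than the paper does. The reduction $\varphi(G)=\sum_i\varphi(C_i)$ via Theorem \ref{T-SNUG}, and the bookkeeping that lets the single even cycle contribute $0=\lceil n_1/2\rceil-\nu(C_1)$ to both sides, are fine as far as they go.

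The genuine gap is exactly where you flag it, and it cannot be closed. The reverse inequality $\nu(G)\ge\sum_i\nu(C_i)$ needs maximum matchings $N_i$, one per cycle, whose union is a matching of $G$. Vertex-transitivity of an odd cycle lets you prescribe which \emph{single} vertex its maximum matching misses; it does not let you force it to miss two. So the construction breaks whenever some odd cycle shares two or more vertices with the even cycle (whose maximum matching saturates every vertex), or more generally whenever the avoidance constraints landing on one odd cycle involve more than one of its vertices. Concretely, let $G$ consist of the $4$-cycle $abcda$ together with the triangle with edges $ac$, $ce$, $ea$. This graph is Eulerian and every cycle decomposition of it consists of one $4$-cycle and one triangle, so the hypothesis holds; yet $\nu(G)=2$ while $\nu(C_4)+\nu(C_3)=3$. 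The formula would give $\varphi(G)=(2+2)-2=2$, whereas deleting the single edge $ac$ leaves a bipartite graph with parts $\{a,c\}$ and $\{b,d,e\}$, so by Theorem \ref{T-SNGB1} we have $\varphi(G)=1$. Thus the statement itself is false without further hypotheses controlling how the cycles intersect, and the same defect already afflicts Equation \ref{myeq1} underlying Theorem \ref{T-SNFUOC} (for instance, $K_5$ decomposes into two edge-disjoint $5$-cycles, yet $\nu(K_5)=2\ne 4$). Writing ``once the $N_i$ are chosen in such a compatible manner'' does not discharge the obstacle you yourself identified; no such choice exists in general.
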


A {\em vertex cover} of a graph $G$ is a subset $S$ of $V(G)$ such that each edge of $G$ has at least one end vertex in $S$. The number of vertices in a minimum vertex cover of a graph $G$ is known as the {\em vertex covering number} or simply the {\em covering number} of $G$ and is denoted by $\beta(G)$.

The relations of weak IASI and sparing number with its minimal vertex cover and covering number are discussed in the following theorem.

\begin{theorem}\label{T-SNVC}
The minimum number of mono-indexed vertices in a weak IASI graph $G$ is equal to the covering number of $G$. Moreover, the sparing number of $G$ is the number edges of $G$ which have both of their end vertices in the minimal vertex cover of $G$.
\end{theorem}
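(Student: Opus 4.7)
The plan is to split the statement into two assertions and prove them in turn, using a standard fact about integer sumsets as the main ingredient.

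For the first assertion, I would begin by observing that in any weak IASI $f$, the set $M$ of mono-indexed vertices of $G$ must be a vertex cover. The reason is that for any two finite sets $A,B\subseteq\mathbb{N}_0$ with $|A|,|B|\ge 2$, one has $|A+B|\ge |A|+|B|-1>\max(|A|,|B|)$, which would contradict the defining condition $|g_f(uv)|=\max(|f(u)|,|f(v)|)$ of a weak IASI. Hence at least one endpoint of each edge lies in $M$, so $|M|\ge \beta(G)$. For the matching upper bound, pick a minimum vertex cover $S$ of $G$, label the vertices of $S$ by distinct singleton subsets of $\mathbb{N}_0$, and label the vertices of the independent set $V(G)\setminus S$ by pairwise distinct non-singleton subsets chosen so that the sumsets $f(u)+f(v)$ remain pairwise distinct (for example, using successive subsets supported on sufficiently separated intervals of $\mathbb{N}_0$). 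Every edge has at least one endpoint in $S$, so the weak IASI condition holds, and $|M|=|S|=\beta(G)$ is achieved.

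For the second assertion, I would note that an edge $uv$ is mono-indexed precisely when both $f(u)$ and $f(v)$ are singletons, since $|g_f(uv)|=\max(|f(u)|,|f(v)|)$. Together with the first assertion, this means that in any weak IASI of $G$ every mono-indexed edge must have both endpoints in some vertex cover of $G$ of size at least $\beta(G)$. In the particular labeling constructed above, the mono-indexed edges are exactly those edges of $G$ whose endpoints both lie in the chosen minimum vertex cover $S$, which gives the claimed count.

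The main technical step is the sumset inequality $|A+B|\ge |A|+|B|-1$ for subsets of $\mathbb{N}_0$; with this in hand, everything else is assembling and bookkeeping. A secondary subtlety, which the proof should at least flag, is injectivity of $f$ and $g_f$ on the non-singleton side: this is handled by choosing the non-singleton labels on widely separated stretches of $\mathbb{N}_0$ so that no two sumsets coincide. A further subtle point glossed over by the statement is that distinct minimum vertex covers can induce different numbers of edges, so strictly one should take the minimum vertex cover minimizing the induced edge count; the argument above shows the sparing number equals this minimum, realized by an appropriate choice of $S$.
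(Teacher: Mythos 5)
Your treatment of the first assertion is correct and is in fact more complete than the paper's own proof. Both arguments rest on the same partition of $V(G)$ into a minimum vertex cover $S$ and the independent set $V(G)\setminus S$, with singletons on $S$ and non-singletons on the complement; but the paper simply declares that ``the vertices in $S$ can not be labeled by non-singleton sets,'' which is the entire content of the lower bound and is left unjustified (and is not even literally true for a fixed $S$: nothing forces the mono-indexed vertices of an optimal labeling to coincide with that particular cover). What is actually needed is exactly what you supply: in any weak IASI the non-mono-indexed vertices form an independent set, because $|A+B|\ge |A|+|B|-1>\max(|A|,|B|)$ whenever $|A|,|B|\ge 2$; hence the mono-indexed vertices form a vertex cover and number at least $\beta(G)$. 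Note that Lemma \ref{L-Card} only records the non-strict bound $|A+B|\ge\max(|A|,|B|)$, so your sumset inequality is a genuine addition. Your attention to injectivity of $f$ and $g_f$ is also absent from the paper (and the singleton labels on $S$ need the same care: distinct singletons $\{x\}$, $\{y\}$ can produce coinciding edge labels $\{x+y\}$, so $S$ should be labeled from a Sidon-type set).

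On the second assertion you follow the paper's route --- count the edges induced by the chosen minimum vertex cover --- and you rightly sense that something is off, but the problem is worse than a tie-break among minimum vertex covers. Your construction only yields the upper bound $\varphi(G)\le |E(G[S])|$; for equality one must minimize $|E(G[S])|$ over \emph{all} vertex covers $S$ (equivalently, over all independent sets $V(G)\setminus S$), and this minimum can be strictly smaller than the value attained by \emph{every} minimum-cardinality vertex cover. For example, take a triangle $abc$ and attach two pendant vertices to each of $a$, $b$ and $c$. The unique minimum vertex cover is $\{a,b,c\}$, which induces $3$ edges, yet $\varphi(G)=1$: put non-singleton labels on $a$ and on the four pendants of $b$ and $c$, leaving only $bc$ mono-indexed. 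So minimizing mono-indexed vertices and minimizing mono-indexed edges are genuinely different optimizations, and the second assertion (in your formulation and the paper's alike) is false as stated; the correct statement is $\varphi(G)=\min\{|E(G[S])| : S \text{ a vertex cover of } G\}$. Neither your argument nor the paper's closes this gap, though yours at least flags that a choice of cover must be made.
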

\begin{proof}
Let $S$ be the minimal vertex cover of $G$. Then, $S$ and $V-S$ are two partitions of $V(G)$. Since every edge of $G$ has at least one end vertex in $S$, no two vertices in $V-S$ are adjacent in $G$ and some edges in $G$ may have both of their end vertices in $S$. Since $G$ is a weak IASI graph, the vertices in $S$ can not be labeled by non-singleton vertices. Hence, the set-labels of the vertices in $G$ are singleton sets of non-negative integers. That is, the number of mono-indexed vertices in $G$ is equal to its covering number.

Since no two vertices in $V-S$ are adjacent each other, we can label all the vertices in $V-S$ by non-singleton sets of non-negative integers. Hence, an edge having both of its end vertices in $S$ is mono-indexed. Therefore, the sparing number of $G$ is the number of edges of $G$ which have both of their end vertices in $S$. This completes the proof.
\end{proof}

An {\em independent set} or {\em stable set} of a graph $G$ is a subset $S'$ of $V(G)$ in a graph, no two vertices in $S'$ are adjacent. It is to be noted that a subset $S'$ of $V(G)$ is independent if and only if its complement $V(G)-S'$ is a vertex cover.  The number of vertices in a maximal independence set is called the independence number of $G$ and is denoted by $\alpha(G)$.

The following theorem establishes the relation between the independence number and covering number of a given graph $G$.

\begin{theorem}\label{T-RINCN}
\cite{FH} For any connected non-trivial graph $G$, $\alpha(G)+\beta(G)=|V(G)|$.
\end{theorem}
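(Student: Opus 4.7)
The plan is to exploit the duality between independent sets and vertex covers that is already highlighted just before the theorem statement, namely that a subset $S'\subseteq V(G)$ is independent if and only if $V(G)\setminus S'$ is a vertex cover. This bijection (given by taking complements inside $V(G)$) preserves the trivial identity $|S'|+|V(G)\setminus S'|=|V(G)|$, so once we verify that it sends maximum independent sets to minimum vertex covers and vice versa, the equation $\alpha(G)+\beta(G)=|V(G)|$ will follow immediately.

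First I would prove the complementation lemma in both directions. For the forward direction, suppose $S'$ is independent; if some edge $uv$ had both endpoints in $V(G)\setminus S'$, then \dots wait, that is trivially fine \dots I need the correct direction: if $S'$ is independent, then for every edge $uv$, not both of $u,v$ lie in $S'$, hence at least one of $u,v$ lies in $V(G)\setminus S'$, so $V(G)\setminus S'$ meets every edge and is a vertex cover. Conversely, if $C$ is a vertex cover, then no edge can have both endpoints outside $C$, so $V(G)\setminus C$ is independent. This establishes the bijection between independent sets and vertex covers of $G$.

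Next I would use this bijection to translate extremality. Let $S'$ be a maximum independent set, so $|S'|=\alpha(G)$. Then $V(G)\setminus S'$ is a vertex cover, giving $\beta(G)\le |V(G)|-\alpha(G)$. In the other direction, let $C$ be a minimum vertex cover, so $|C|=\beta(G)$. Then $V(G)\setminus C$ is an independent set, giving $\alpha(G)\ge |V(G)|-\beta(G)$, i.e.\ $\beta(G)\ge |V(G)|-\alpha(G)$. Combining the two inequalities yields $\beta(G)=|V(G)|-\alpha(G)$, which rearranges to the claimed identity.

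There is really no main obstacle here; the statement is a classical result of Gallai and the only subtlety is being careful with the quantifiers in the complementation lemma. The hypothesis that $G$ is connected and non-trivial is not actually needed for the identity itself, but it guarantees that both $\alpha(G)$ and $\beta(G)$ are well-defined positive integers (in particular $E(G)\ne\emptyset$, so a vertex cover must be non-empty), which is why the statement is phrased that way. I would simply remark on this at the end rather than dwell on it.
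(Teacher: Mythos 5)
Your proof is correct: the complementation lemma is stated with the right quantifiers in both directions, and the two inequalities obtained by complementing a maximum independent set and a minimum vertex cover combine to give the identity. The paper itself offers no proof of this statement --- it is simply cited from Harary --- and your argument is precisely the standard textbook proof of Gallai's identity, so there is nothing to reconcile; your closing remark that connectedness is not actually needed for the identity is also accurate.
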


Note that if $S$ is a vertex cover of a graph $G$, then the set $V-S$ is an independent set of $G$. Hence, if $S$ is a minimal vertex cover of $G$, then $V-S$ is a maximal independent set of $G$. Hence we have,

\begin{theorem}
Let $G$ be a weak IASI graph on $n$ vertices. Then, the number of mono-indexed vertices in $G$ is $n-\alpha(G)$, where $\alpha(G)$ is the independence number of the graph $G$.
\end{theorem}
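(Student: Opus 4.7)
The plan is to chain together two results already in hand: Theorem \ref{T-SNVC}, which identifies the minimum number of mono-indexed vertices with the covering number $\beta(G)$, and Theorem \ref{T-RINCN}, which says $\alpha(G)+\beta(G)=|V(G)|$. So the whole argument is essentially a one-line algebraic manipulation once these two ingredients are quoted.

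First, I would invoke Theorem \ref{T-SNVC} to assert that if $S$ is a minimal vertex cover of $G$, then in any weak IASI labelling the vertices of $S$ must be mono-indexed (they cannot carry non-singleton sets without violating the weak IASI condition on edges inside $S$), while the vertices of $V(G)\setminus S$ form an independent set and can therefore all be assigned non-singleton set-labels. Hence the minimum possible number of mono-indexed vertices equals $|S|=\beta(G)$.

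Next, I would recall the standard duality (noted in the paragraph just before the statement): a subset $S\subseteq V(G)$ is a vertex cover if and only if $V(G)\setminus S$ is independent, so a minimal vertex cover is precisely the complement of a maximum independent set. Combined with Theorem \ref{T-RINCN}, this yields $\beta(G)=n-\alpha(G)$. Substituting into the conclusion of the previous paragraph gives that the number of mono-indexed vertices in $G$ is $n-\alpha(G)$, as required.

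There is no real obstacle here; the only thing to be slightly careful about is stating explicitly that we are counting the \emph{minimum} number of mono-indexed vertices over all weak IASIs of $G$ (which is what Theorem \ref{T-SNVC} supplies), since the phrasing of the statement is a bit informal. Once that is clarified, the proof is immediate from the two quoted theorems.
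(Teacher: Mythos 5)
Your proposal is correct and follows exactly the same route as the paper: quote Theorem \ref{T-SNVC} to identify the (minimum) number of mono-indexed vertices with $\beta(G)$, then apply Theorem \ref{T-RINCN} to get $\beta(G)=n-\alpha(G)$. Your added remark that the statement should be read as the \emph{minimum} over all weak IASIs is a fair clarification, but the substance of the argument is identical to the paper's.
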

\begin{proof}
By Theorem \ref{T-SNVC}, the number of mono-indexed vertices of a graph $G$ is equal to $\beta(G)$, the covering number of $G$. By Theorem \ref{T-RINCN}, we have $\beta(G)=n-\alpha(G)$. Therefore, the number of mono-indexed vertices = $n-\alpha(G)$.
\end{proof}

If $S$ is a minimal vertex cover of $G$ and if all vertices in $S$ are labeled by distinct singleton sets of non-negative integers, $V-S$ is a maximal independent set in which all elements are non-adjacent to each other and the set-labels of all of them are distinct non-singleton sets of non-negative integers. Hence, we have the following proposition.

\begin{proposition}
If $G$ is a weak IASI graph, then the maximum number of vertices that are not mono-indexed in $G$ is equal to the independence number of $G$.
\end{proposition}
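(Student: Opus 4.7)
The plan is to obtain the proposition as an immediate complement of the theorem just proved. On $n$ vertices, the number of non-mono-indexed vertices equals $n$ minus the number of mono-indexed vertices, so maximizing the former is the same as minimizing the latter. Since the preceding theorem shows that the minimum number of mono-indexed vertices is $n-\alpha(G)$, the maximum number of non-mono-indexed vertices must be $n-(n-\alpha(G))=\alpha(G)$.

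If a self-contained argument is preferred, I would proceed in two steps. First, for the upper bound, I would observe that in a weak IASI graph no two adjacent vertices can both be non-mono-indexed: if $|f(u)|\ge 2$ and $|f(v)|\ge 2$, taking the two smallest elements $a_1<a_2$ of $f(u)$ and $b_1<b_2$ of $f(v)$ produces the three distinct sums $a_1+b_1<a_1+b_2<a_2+b_2$, so $|g_f(uv)|\ge 3>\max(|f(u)|,|f(v)|)$, contradicting the weak IASI condition. Hence the set of non-mono-indexed vertices is an independent set of $G$, and so has at most $\alpha(G)$ elements.

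Second, for achievability, I would take a maximum independent set $S$ of size $\alpha(G)$, whose complement $V(G)-S$ is a vertex cover, and label each vertex of $V(G)-S$ by a distinct singleton and each vertex of $S$ by a distinct non-singleton subset of $\mathbb{N}_0$, choosing the non-singleton labels so that $f$ is injective and $g_f$ is injective. This is feasible because $\mathbb{N}_0$ is infinite and, since $S$ is independent, every edge of $G$ has at least one endpoint with a singleton label, so on each edge $uv$ with $u\in V(G)-S$ and $v\in S$ the requirement becomes $|g_f(uv)|=|f(v)|$, which is automatic from Lemma~\ref{L-Card}. The only nontrivial ingredient is the arithmetic inequality used in the upper bound, and since it is a routine consequence of ordering the labels, I do not anticipate any real obstacle.
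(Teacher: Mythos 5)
Your first paragraph is exactly the paper's route: the paper states this proposition without a separate proof, obtaining it as the complement of the preceding theorem (minimum number of mono-indexed vertices equals $n-\alpha(G)$, via Theorem~\ref{T-SNVC} and Theorem~\ref{T-RINCN}), and your complementation argument is correct and complete. However, your optional self-contained argument contains an arithmetic slip in the upper bound: from the three sums $a_1+b_1<a_1+b_2<a_2+b_2$ you conclude $|g_f(uv)|\ge 3>\max(|f(u)|,|f(v)|)$, but $3>\max(|f(u)|,|f(v)|)$ fails whenever either set-label has more than two elements, so as written the inequality does not contradict the weak IASI condition in general. The standard fix is to list all of $a_1+b_1<a_1+b_2<\cdots<a_1+b_n<a_2+b_n<\cdots<a_m+b_n$, giving $|f(u)+f(v)|\ge |f(u)|+|f(v)|-1$, which strictly exceeds $\max(|f(u)|,|f(v)|)$ precisely when both cardinalities are at least $2$; with that correction your independent-set upper bound and the achievability construction both go through and give a genuinely self-contained proof, something the paper itself does not supply (it imports the "adjacent vertices cannot both be non-mono-indexed" fact implicitly from the cited earlier work).
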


\newpage

\section{The Sparing Number of Certain Named Graphs}

In view of Theorem \ref{T-SNGB1}, we discuss the sparing number of some standard non-bipartite graphs.

\begin{proposition}
The sparing number of Petersen graph is $3$.
\end{proposition}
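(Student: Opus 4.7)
The plan is to combine Theorem \ref{T-SNGB1} with explicit structural information about the Petersen graph $P$. Theorem \ref{T-SNGB1} reduces the computation of $\varphi(P)$ to finding the minimum number of edges whose removal from $P$ leaves a bipartite graph, so I would establish matching bounds $\varphi(P)\le 3$ and $\varphi(P)\ge 3$ separately.

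For the upper bound I would exhibit a single bipartition with few monochromatic edges. Using the standard labelling $V(P)=\{u_1,\dots,u_5,v_1,\dots,v_5\}$ with outer cycle $u_1u_2u_3u_4u_5$, inner pentagram defined by the edges $v_iv_{i+2}$ (indices mod $5$), and spokes $u_iv_i$, take $A=\{u_1,u_3,v_4,v_5\}$ and $B=V(P)\setminus A$. A routine check shows that no two vertices of $A$ are joined by an outer edge, by an inner chord, or by a spoke, so $A$ is independent; meanwhile, the only edges inside $B$ are $u_4u_5$ (outer), $v_1v_3$ (inner), and $u_2v_2$ (spoke). Deleting these three edges leaves a bipartite graph with parts $A$ and $B$, so $\varphi(P)\le 3$.

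For the lower bound I would use the fact that $P$ contains exactly twelve $5$-cycles, and by its edge-transitivity each edge lies in $\frac{5\cdot 12}{15}=4$ of them. Consequently any two edges of $P$ meet at most $8$ of the twelve $5$-cycles, so after removing any two edges at least four $5$-cycles remain intact. Since a graph containing a $5$-cycle is non-bipartite, no two-edge set can serve as a bipartization set, and hence $\varphi(P)\ge 3$. Combining the two bounds yields $\varphi(P)=3$. The main delicate step is the lower bound: one needs the count of $5$-cycles in $P$ and their even distribution across the edges, which can either be quoted as a known property of the Petersen graph or derived from its automorphism group $S_5$ (the stabilizer of a $5$-cycle has order $10$, giving $120/10=12$ such cycles, spread uniformly over the $15$ edges by edge-transitivity).
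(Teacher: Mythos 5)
Your proposal is correct, and it is organized differently from the paper's proof in a way that is worth noting. For the upper bound you do essentially what the paper does --- exhibit three edges whose removal bipartizes the Petersen graph --- but you present it as a direct two-coloring (the independent set $A=\{u_1,u_3,v_4,v_5\}$ against $B$, with exactly the three ``bad'' edges $u_4u_5$, $v_1v_3$, $u_2v_2$ inside $B$), whereas the paper destroys odd cycles one at a time, removing $u_3u_4$, $v_2v_5$ and then $u_1v_1$; the two edge sets are equivalent under the automorphism group and both verify $\varphi(P)\le 3$. The genuine difference is your lower bound. Theorem \ref{T-SNGB1} requires $E'$ to be a \emph{minimal} bipartizing set, and the paper simply asserts that its three-edge removal yields a maximal bipartite subgraph without checking that no two-edge removal suffices. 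Your counting argument --- twelve $5$-cycles, each edge lying on exactly $4$ of them by edge-transitivity, so two edges can destroy at most $8$ and at least four odd cycles survive --- closes that gap rigorously and is correct (the count of twelve $5$-cycles and the stabilizer computation in $S_5$ are both standard and accurately stated). In short, your proof buys a complete verification of minimality that the paper's proof takes for granted, at the cost of invoking (or deriving) the $5$-cycle census of the Petersen graph.
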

\begin{proof}
Name the vertices of the Petersen Graph $G$ as shown in Figure \ref{G-PGWL}. In $G$, both the external cycle $C_1:u_1u_2u_3u_4u_5u_1$ and the internal cycle $C_2:v_1v_3v_5v_2v_4v_1$ are odd and hence $G$ is not bipartite. Hence, remove one edge, say $u_3u_4$, from the cycle $C_1$ and remove one edge, say $v_2v_5$, from the cycle $C_2$. 

In the resultant graph $G'=G-\{u_3u_4,v_2v_5\}$, the cycles $C_3:u_1v_1v_4u_4u_5u_1$ and $C_4:u_1v_1v_3u_3u_2u_1$ are odd. Hence, remove the common edge $u_1v_1$ from $C_3$ and $C_4$. Let $H=G-\{u_3u_4,v_2v_5,u_1v_1\}$, which is shown in Figure \ref{G-PG-MBSG}. All the cycles in $H$ are of even length and hence $H$ is the maximal bipartite subgraph of $G$. Let $E'=\{u_3u_4,v_2v_5,u_1v_1\}$. Then, by Theorem \ref{T-SNGB1}, $\varphi(G)=|E'|=3$.
\end{proof}

\begin{figure}[h!]
\begin{center}
\begin{subfigure}[b]{0.45\textwidth}
\includegraphics[scale=0.45]{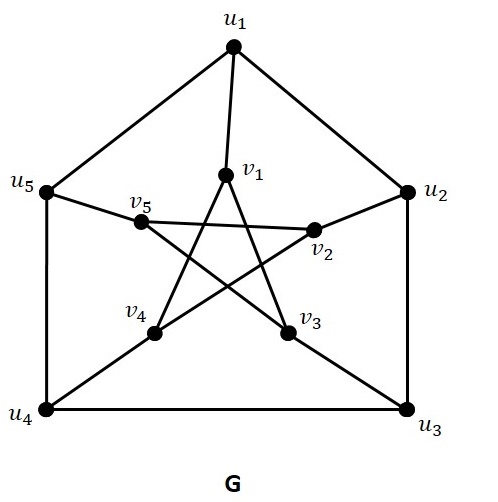}
\caption{\small \sl The Peterson graph.}\label{G-PGWL}
\end{subfigure}
\quad
\begin{subfigure}[b]{0.45\textwidth}
\includegraphics[scale=0.45]{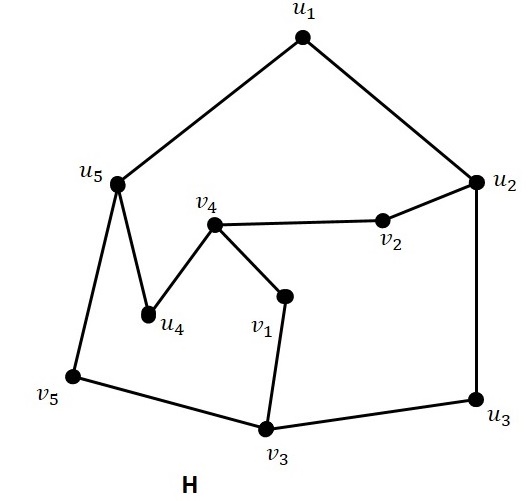}
\caption{\small \sl Maximal bipartite subgraph of the Peterson graph.}\label{G-PG-MBSG}
\end{subfigure}
\caption{}
\end{center}  
\end{figure}

\begin{remark}{\rm
For the Peterson graph $G$, $|E(G)|=15$. Hence, by Corollary \ref{C-T-SNGB1}, the number of edges in a maximal bipartite subgraph of $G$ is $b(G)= |E(G)|-\varphi(G)=15-3=12$.}
\end{remark}

One other well-known non-bipartite graph is the {\em Frucht graph}, which is is a $3$-regular graph with $12$ vertices, $18$ edges and every vertex of which can be distinguished topologically from every other vertex. Hence we have,

\begin{proposition}
The sparing number of Frucht graph is $3$.
\end{proposition}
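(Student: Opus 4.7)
The plan is to imitate the argument used for the Petersen graph in the preceding proposition and to invoke Theorem \ref{T-SNGB1}: since the Frucht graph $F$ is not bipartite, its sparing number equals the size of a minimum edge set $E'$ whose removal produces a bipartite subgraph. Accordingly I would first fix a standard drawing of $F$, label its twelve vertices $v_1,\dots,v_{12}$, list its eighteen edges explicitly, and record a list of its short odd cycles (there are odd cycles of length $5$, $7$ and $9$ in $F$).

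For the upper bound $\varphi(F)\le 3$, I would exhibit three edges $e_1,e_2,e_3$ of $F$ whose deletion destroys every odd cycle, and certify this by producing an explicit $2$-colouring of $F-\{e_1,e_2,e_3\}$. Guided by the Petersen-graph proof, the heuristic is to pick edges lying on as many different odd cycles as possible, so that a single removal eliminates several odd cycles simultaneously; once a candidate triple is found, bipartiteness of the remaining $15$-edge graph is verified by a straightforward breadth-first $2$-colouring.

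For the lower bound $\varphi(F)\ge 3$, I would exhibit three pairwise edge-disjoint odd cycles $C^{(1)},C^{(2)},C^{(3)}$ in $F$. Since any edge set that makes $F$ bipartite must meet every odd cycle, and since the $C^{(i)}$ share no edges, such a set must contain at least three edges. Combining the two bounds with Theorem \ref{T-SNGB1} then yields $\varphi(F)=3$.

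The main obstacle is that, unlike the Petersen graph, the Frucht graph has trivial automorphism group, so there is no symmetry to exploit in locating either the three bipartising edges or the three edge-disjoint odd cycles; both tasks reduce to an explicit combinatorial search on the adjacency list of $F$. A secondary subtlety is that the three chosen odd cycles must genuinely be edge-disjoint (not merely distinct), since two odd cycles sharing an edge can sometimes be broken by a single deletion; careful bookkeeping of the cycle edges will be required to rule this out.
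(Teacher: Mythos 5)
Your overall strategy is the same as the paper's: exhibit a set $E'$ of three edges whose removal leaves a bipartite subgraph and invoke Theorem \ref{T-SNGB1}. Where you genuinely add something is the lower bound: the paper's own proof simply removes $v_5v_6$, $v_4v_{10}$, $v_1v_8$ and asserts that the result is a \emph{maximal} bipartite subgraph, with no argument that two deletions could not suffice; your observation that three pairwise edge-disjoint odd cycles force $|E'|\ge 3$ is exactly the missing justification, and it does go through (in the paper's labelling, the triangles $v_5v_6v_{12}$ and $v_3v_4v_{10}$ together with the $5$-cycle $v_1v_8v_9v_{11}v_7v_1$ are pairwise edge-disjoint). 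The weakness of your write-up is that it remains a plan: you never actually name the three edges to be deleted, never give the $2$-colouring certifying bipartiteness of the remaining $15$-edge graph, and never exhibit the three edge-disjoint odd cycles. For a statement about one specific $18$-edge graph, that explicit combinatorial data \emph{is} the proof, so as it stands the argument is a correct and even improved blueprint, but an incomplete proof until the witnesses (e.g., $E'=\{v_5v_6,\,v_4v_{10},\,v_1v_8\}$ and the three cycles above) are written down and checked.
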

\begin{proof}
Name the vertices of the Frucht Graph $G$ as shown in Figure \ref{G-FGWL}. In $G$, the cycle $v_5v_6v_{12}v_5$ is of odd length and has a common edge $v_5v_{12}$ with the even cycle $v_5v_{12}v_{11}v_9v_{10}v_4v_5$ and has a common edge $v_6v_12$ with the even cycle $v_6v_{12}v_{11}v_7v_6$. Hence, remove the edge $v_5v_6$ from $G$. 
The cycle $v_3v_4v_{10}v_3$ in $G-\{v_5v_6\}$ has a common edge $v_4v_{10}$ with the even cycle $v_5v_{12}v_{11}v_9v_{10}v_4v_5$ and has a common edge $v_3v_{10}$ with the odd cycle $v_2v_3v_{10}v_9v_8v_2$. Hence, remove the edge $v_4v_{10}$ from $G-\{v_5v_6\}$. The cycle in $G-\{v_5v_6,v_4v_{10}\}$ has a common edge $v_2v_8$ with the even cycle $v_2v_8v_9v_{10}v_4v_3v_2$ and has a common edge $v_1v_8$ with the odd cycle $v_1v_8v_9v_{11}v_7v_1$. Hence, remove the edge $v_1v_8$ from $G-\{v_5v_6,v_4v_{10}\}$. Now all the cycles in $H=G-\{v_5v_6,v_4v_{10},v_1v_8\}$ are of even length, which is shown in Figure \ref{G-FG-MBSG}. Therefore, $H$ is the maximal bipartite subgraph in $G$. If $E'=\{v_5v_6,v_4v_{10},v_1v_8\}$, then by Theorem \ref{T-SNGB1}, $\varphi(G)=|E'|=3$.
\end{proof}

\begin{figure}[h!]
\begin{center}
\begin{subfigure}[b]{0.45\textwidth}
\includegraphics[scale=0.4]{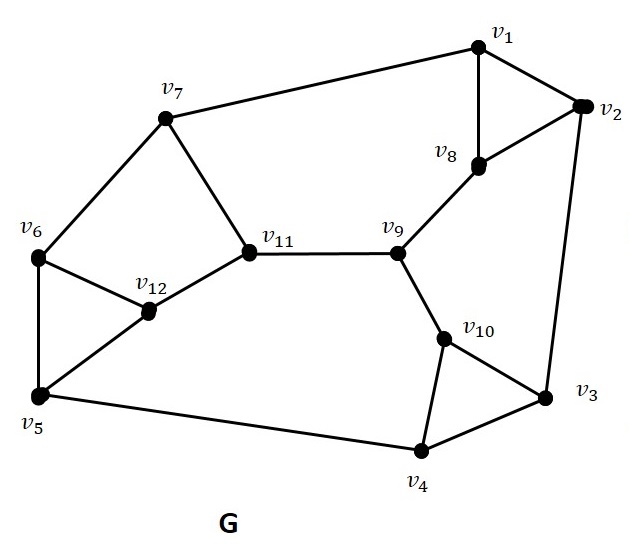}
\caption{\small \sl The Frucht graph.}\label{G-FGWL}
\end{subfigure}
\quad
\begin{subfigure}[b]{0.45\textwidth}
\includegraphics[scale=0.4]{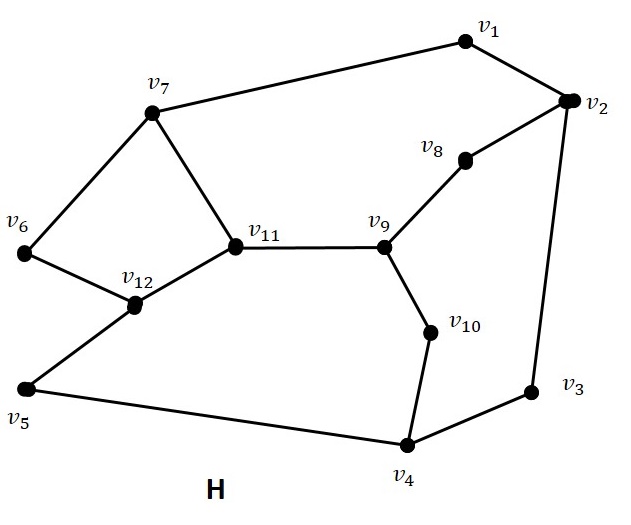}
\caption{\small \sl Maximal bipartite subgraph of the Frucht graph.}\label{G-FG-MBSG}
\end{subfigure}
\caption{}
\end{center}  
\end{figure}

\begin{remark}{\rm
For the Frucht graph $G$, $|E(G)|=18$. Hence, by Corollary \ref{C-T-SNGB1}, the number of edges in a maximal bipartite subgraph of $G$ is $b(G)= |E(G)|-\varphi(G)=18-3=15$.}
\end{remark}

The $Gr{\ddot{o}}tzsch ~ graph$ is a triangle-free graph with $11$ vertices, $20$ edges. The following theorem establishes the sparing number of {\rm $Gr{\ddot{o}}tzsch$} graph.

\begin{proposition}
The sparing number of the {\rm $Gr{\ddot{o}}tzsch$} graph is $5$.
\end{proposition}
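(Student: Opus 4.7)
The plan is to apply Theorem \ref{T-SNGB1}: $\varphi(G)$ equals the size of a minimum edge set whose removal makes the Gr\"otzsch graph $G$ bipartite. I would first label the vertices according to the Mycielski construction of $G$ from $C_5$: an inner 5-cycle $v_1v_2v_3v_4v_5v_1$, shadow vertices $u_1,\dots,u_5$ with $u_i\sim v_{i-1}$ and $u_i\sim v_{i+1}$ (indices taken modulo $5$), and a central vertex $w$ adjacent to each $u_i$. This accounts for all $5+10+5=20$ edges on $11$ vertices, so it is enough to catalogue the odd cycles in terms of this labeling.

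Following the template used for the Petersen and Frucht graphs, I would then iteratively identify odd cycles and delete an edge that belongs to as many of them as possible. Since $G$ is triangle-free, every odd cycle has length at least $5$. The inner 5-cycle $v_1\cdots v_5 v_1$ is odd, forcing at least one deletion from it, say $v_1v_2$. I would next attack the ``Mycielski-substituted'' 5-cycles obtained by replacing a consecutive segment $v_{i-1}v_iv_{i+1}$ with $v_{i-1}u_iv_{i+1}$, and the 5-cycles through $w$ of the form $wu_iv_{i+1}v_{i+2}u_{i+3}w$, choosing four further edges whose removal collectively breaks all remaining odd cycles. After fixing such a set $E'$ with $|E'|=5$, I would verify $G-E'$ is bipartite by exhibiting an explicit 2-colouring (for instance one in which the $v_i$ inherit a colouring from a proper $2$-colouring of the path $v_2v_3v_4v_5v_1$, each $u_i$ receives the colour forced by its two surviving $v$-neighbours, and $w$ receives the colour opposite to the majority of its surviving $u_i$-neighbours). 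Theorem \ref{T-SNGB1} then yields $\varphi(G)\le 5$.

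The main obstacle is the matching lower bound $\varphi(G)\ge 5$. A naive odd-cycle packing argument, using that every odd cycle has length $\ge 5$ and $\lfloor 20/5\rfloor=4$, only gives $\varphi(G)\ge 4$. To sharpen this to $5$, I would either \textbf{(a)} exhibit a family of odd cycles together with an edge-weighted fractional covering argument that forces at least $5$ deletions, or \textbf{(b)} argue that the max-cut of $G$ equals $15$, equivalently that every bipartition of $V(G)$ leaves at least $5$ monochromatic edges. Route (b) is the more concrete: exploiting the $D_5$-symmetry of the Mycielski construction, the possible bipartitions $(S,V(G)\setminus S)$ fall into a small number of symmetry classes, parametrised by how many of the $v_i$ and $u_i$ lie in $S$ together with the side of $w$; a short case analysis over these classes verifies the max-cut bound and hence $\varphi(G)\ge 5$. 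Carrying out this finite but somewhat tedious case analysis is the step I expect to require most of the work.
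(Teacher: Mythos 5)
Your instinct that Theorem \ref{T-SNGB1} requires a \emph{minimum} bipartizing edge set, and hence that an explicit lower bound must accompany the edge--deletion construction, is sound --- and it is more than the paper's own proof supplies, since the paper merely removes the five edges of the inner $5$-cycle of the Mycielski construction and asserts maximality. The genuine gap in your proposal is that the lower bound you plan to establish is false: the max-cut of the Gr\"otzsch graph is $16$, not $15$. In your own labelling (inner cycle $v_1v_2v_3v_4v_5v_1$, shadows $u_i\sim v_{i-1},v_{i+1}$ with indices mod $5$, apex $w\sim u_i$), take
\[
A=\{w,\,u_2,\,v_2,\,v_4,\,v_5\},\qquad B=\{u_1,\,u_3,\,u_4,\,u_5,\,v_1,\,v_3\}.
\]
Checking all $20$ edges, the only monochromatic ones are $wu_2$ and $v_4v_5$ (inside $A$) and $u_4v_3$ and $u_5v_1$ (inside $B$). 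Hence $G-\{wu_2,\,v_4v_5,\,u_4v_3,\,u_5v_1\}$ is bipartite with parts $A$ and $B$, and Theorem \ref{T-SNGB1} gives $\varphi(G)\le 4$. So the ``finite but tedious case analysis'' you defer would not verify your route (b); it would refute it, and with it the proposition itself.

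In fact a short analysis over the symmetry classes you describe (parametrised by which $u_i$ share $w$'s side and which $v_i$ lie opposite $w$) shows that every bipartition leaves at least $4$ monochromatic edges, the minimum being attained exactly when one $u_t$ sits with $w$ and $S=\{v_{t-1},v_{t+1}\}$ sits opposite. So the correct value of the edge bipartization number --- and therefore, by the paper's own Theorem \ref{T-SNGB1}, of the sparing number --- of the Gr\"otzsch graph is $4$, not $5$. The lesson is the one your proposal already half-recognises: an upper-bound construction of the paper's type proves nothing about $\varphi(G)$ without a matching lower bound, and here the missing lower bound does not exist at the claimed value.
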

\begin{proof}
Name the vertices of {\rm $Gr{\ddot{o}}tzsch$} graph $G$ as shown in Figure \ref{G-GGWL}. Remove the edge $v_3v_9$ common to the odd cycles $v_1v_2v_3v_9v_{10}v_1$ and $v_3v_4v_5v_6v_7v_8v_9v_3$, the edge $v_1v_5$ common to $v_1v_2v_3v_4v_5v_1$ and $v_5v_6v_7v_8v_9v_{10}v_1v_5$, the edge $v_3v_7$ common to $v_3v_4v_5v_6v_7v_3$ and $v_1v_2v_3v_4v_5v_6v_7v_8v_9v_{10}v_1$, the edge $v_5v_9$ common to $v_5v_6v_7v_8v_9v_5$ and the edge $v_9v_{10}v_1v_2v_3v_4\\v_5v_9$ and $v_1v_7$ common to $v_7v_8v_9v_{10}v_1v_7$ and $v_1v_2v_3v_4v_5v_6v_7v_1$.

The resultant graph $H=G-\{v_3v_9,v_1v_5,v_3v_7,v_5v_9,v_1v_7\}$ contains no odd cycles, as shown in \ref{G-GGMBSG}, and hence $H$ is a maximal bipartite subgraph of $G$. Let $E'=\{v_3v_9,v_1v_5,v_3\\v_7, v_5v_9,v_1v_7\}$. Then, the sparing number $\varphi(G)=|E'|=5$.
\end{proof}

\begin{figure}[h!]
\begin{center}
\begin{subfigure}[b]{0.45\textwidth}
\includegraphics[scale=0.5]{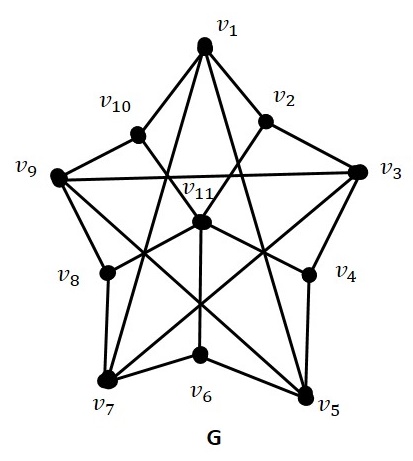}
\caption{\small \sl The {\rm $Gr{\ddot{o}}tzsch$} graph.}\label{G-GGWL}
\end{subfigure}
\quad
\begin{subfigure}[b]{0.45\textwidth}
\includegraphics[scale=0.5]{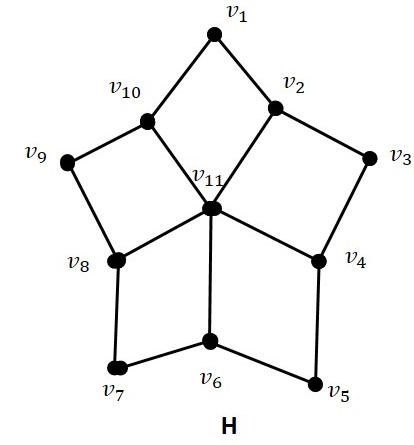}
\caption{\small \sl Maximal bipartite subgraph of\\ {\rm $Gr{\ddot{o}}tzsch$} graph.}\label{G-GGMBSG}
\end{subfigure}
\caption{}
\end{center}  
\end{figure}

\begin{remark}{\rm
For the {\rm $Gr{\ddot{o}}tzsch$} graph $G$, $|E(G)|=20$. Hence, by Corollary \ref{C-T-SNGB1}, the number of edges in a maximal bipartite subgraph of $G$ is $b(G)= |E(G)|-\varphi(G)=20-5=15$.}
\end{remark}

Another well-known non-bipartite graph is the ${Du{\ddot{r}}er graph}$, which is a $3$-regular graph with $12$ vertices, $18$ edges and every vertex of which can be distinguished topologically from every other vertex. Hence we have,

\begin{proposition}
The sparing number of {\em ${D{\ddot{u}}rer graph}$} is $4$.
\end{proposition}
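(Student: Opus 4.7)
The plan is to apply Theorem \ref{T-SNGB1} and exhibit a set $E'$ of exactly four edges such that $G-E'$ is a maximal bipartite subgraph of the D\"urer graph $G$. I would first set up the standard description of $G$ as the generalised Petersen graph $GP(6,2)$: an outer $6$-cycle $u_1u_2u_3u_4u_5u_6u_1$, six inner vertices $v_1,\dots,v_6$ forming two vertex-disjoint triangles $T_1: v_1v_3v_5v_1$ and $T_2: v_2v_4v_6v_2$, and six spokes $u_iv_i$.

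Next I would catalogue the short odd cycles. Apart from the two inner triangles, $G$ contains six odd $5$-cycles of the form $u_iu_{i+1}u_{i+2}v_{i+2}v_iu_i$ (indices mod $6$), each using two consecutive outer edges, two spokes, and a single inner edge; three of these $5$-cycles use the inner edges of $T_1$ and three use those of $T_2$. Any bipartising edge set must destroy all of these.

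I would then propose $E'=\{v_1v_5,\ v_2v_6,\ u_3v_3,\ u_4v_4\}$ and verify that $H=G-E'$ is bipartite by exhibiting the explicit $2$-colouring with parts $\{u_1,u_3,u_5,v_2,v_3,v_6\}$ and $\{u_2,u_4,u_6,v_1,v_4,v_5\}$. A routine check on the $14$ surviving edges (all six outer edges, the four spokes $u_1v_1,u_2v_2,u_5v_5,u_6v_6$, and the two surviving edges of each inner triangle) shows that each is bichromatic, so $H$ is bipartite, and applying Theorem \ref{T-SNGB1} already gives $\varphi(G)\le 4$.

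The main obstacle is the matching lower bound $\varphi(G)\ge 4$. Since $T_1$ and $T_2$ are vertex-disjoint, any bipartising edge set must contain at least one edge from each, giving only the trivial bound $2$. To upgrade this to $4$, I would observe that after deleting any single edge of $T_i$ the remaining two edges of $T_i$ still lie on one of the listed $5$-cycles, and then carry out a short case analysis over the three ways to break $T_1$ and the three ways to break $T_2$ to show that no single additional outer or spoke edge can simultaneously destroy all surviving $T_1$- and $T_2$-type $5$-cycles. This rules out any bipartising set of size $3$, so $\varphi(G)=4$ by Theorem \ref{T-SNGB1}.
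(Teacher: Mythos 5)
Your proposal is correct and follows essentially the same route as the paper: exhibit an explicit four-edge set $E'$ (two inner-triangle edges plus two spokes) whose removal leaves a bipartite subgraph, then invoke Theorem \ref{T-SNGB1}. The only substantive difference is that you supply what the paper omits — an explicit $2$-colouring certifying bipartiteness of $G-E'$ and a case analysis ruling out a bipartising set of size $3$ (using the two vertex-disjoint inner triangles and the six $5$-cycles of $GP(6,2)$) — which makes your argument strictly more complete, since the paper merely asserts that its $H$ is a maximal bipartite subgraph.
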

\begin{proof}
Name the vertices of the {\em ${D{\ddot{u}}rer graph}$} $G$ as shown in Figure \ref{G-DGWL}. In $G$, the cycle $C_1:v_2v_4v_6$ and $C_2:v_3v_5v_1$ are of odd length. Hence, remove the edge $v_2v_6$ from $C_1$ and remove the edge $v_3v_5$ from $C_2$. The cycles $C_3:u_1v_1v_5u_5u_6u_1$ and $C_4:u_1v_1v_3u_3u_2u_1$ in $G-\{v_2v_6,v_3v_5\}$ are odd cycles and have a common edge $u_1v_1$ and the cycles $C_5:u_2v_2v_4u_4u_3u_2$ and $C_6:u_6v_6v_4u_4u_5u_6$ in $G-\{v_2v_6,v_3v_5\}$ are odd cycles and have a common edge $u_4v_4$. Hence, remove the edges $u_1v_1$ and $u_4v_4$ from $G-\{v_2v_6,v_3v_5\}$. Now all the cycles in $H=G-\{v_2v_6,v_3v_5,u_1v_1,u_4v_4\}$ are of even length, which is shown in \ref{G-DG-MBSG}. Therefore, $H$ is the maximal bipartite subgraph in $G$. If $E'=\{v_2v_6,v_3v_5,u_1v_1,u_4v_4\}$, then by Theorem \ref{T-SNGB1}, $\varphi(G)=|E'|=4$.
\end{proof}

\begin{figure}[h!]
\begin{center}
\begin{subfigure}[b]{0.4\textwidth}
\includegraphics[scale=0.4]{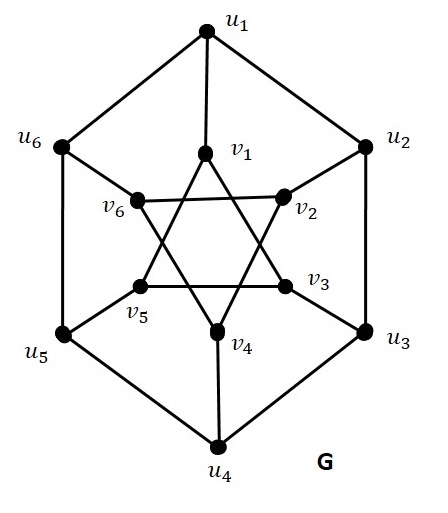}
\caption{\small \sl The {\em ${D{\ddot{u}}rer graph}$}.}\label{G-DGWL}
\end{subfigure}
\quad
\begin{subfigure}[b]{0.45\textwidth}
\includegraphics[scale=0.40]{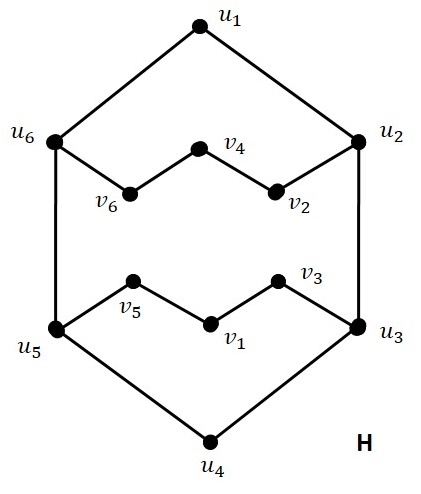}
\caption{\small \sl Maximal bipartite subgraph of the {\rm ${D\ddot{u}}rer$} graph.}\label{G-DG-MBSG}
\end{subfigure}
\caption{}
\end{center}  
\end{figure}

\begin{remark}{\rm
For the {\rm ${D\ddot{u}}rer$} graph $G$, $|E(G)|=18$. Hence, by Corollary \ref{C-T-SNGB1}, the number of edges in a maximal bipartite subgraph of $G$ is $b(G)= |E(G)|-\varphi(G)=18-4=14$.}
\end{remark}

A {\em Dodecahedron} is another popular non-bipartite graph. It is a $3$-regular graph with $20$ vertices and $30$ edges and it contains $12$ pentagons (the cycle $C_5$). Hence, we have

\begin{proposition}
The sparing number of the dodecahedron is $6$.
\end{proposition}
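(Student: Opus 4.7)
The plan is to follow the template of the preceding propositions in this section: label the vertices of the dodecahedron $G$, exhibit an explicit set $E'\subset E(G)$ of six edges whose removal leaves a bipartite subgraph, and then invoke Theorem~\ref{T-SNGB1} to conclude $\varphi(G)=|E'|=6$.

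First, I would fix a Schlegel-diagram labeling of $G$ in four concentric layers: an outer pentagon $a_1a_2a_3a_4a_5a_1$, an inner pentagon $d_1d_2d_3d_4d_5d_1$, and a middle $10$-cycle $b_1c_1b_2c_2b_3c_3b_4c_4b_5c_5b_1$ connected to the two pentagons by ``spokes'' $a_ib_i$ and $c_id_i$. With this labeling the twelve pentagonal faces are easily enumerated: the outer pentagon, the inner pentagon, five outer-side pentagons $a_ia_{i+1}b_{i+1}c_ib_i$, and five inner-side pentagons $d_id_{i+1}c_{i+1}b_{i+1}c_i$ (indices modulo $5$); these are the odd cycles of shortest length in $G$.

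Next I would choose $E'$ by exploiting the fact that every edge of $G$ lies on exactly two pentagonal faces. The idea is to pair up the twelve faces so that each removed edge destroys two of them at once; this amounts to finding a perfect matching in the face-adjacency graph, which is the icosahedron. One concrete choice is
\[
E'=\{a_1a_2,\ a_3b_3,\ a_5b_5,\ d_1d_2,\ c_3d_3,\ c_5d_5\},
\]
since $a_1a_2$ destroys the outer pentagon together with $a_1a_2b_2c_1b_1$, the spokes $a_3b_3$ and $a_5b_5$ take out the remaining four outer-side pentagons, and the three analogous edges in the inner half dispose of the inner pentagon together with all five inner-side pentagons.

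Finally I would verify that $H=G-E'$ is bipartite by writing down an explicit $2$-colouring of $V(H)$---for example, placing $\{a_1,a_2,a_4,c_1,c_2,c_3,c_4,c_5,d_3,d_5\}$ in one class and the remaining ten vertices in the other---and checking edge by edge that every surviving edge is bichromatic. Theorem~\ref{T-SNGB1} then yields $\varphi(G)=6$. The main obstacle I anticipate is precisely this last step: one must argue that no odd cycle at all survives, not merely that every facial pentagon has been broken, and the cleanest route is to exhibit the bipartition directly rather than to appeal to a general cycle-space argument.
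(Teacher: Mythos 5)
Your proposal is correct and follows essentially the same route as the paper: remove six edges, each lying on two of the twelve pentagonal faces, check that the remainder is bipartite, and invoke Theorem~\ref{T-SNGB1}. If anything, your version is tighter than the paper's, since your explicit two-colouring actually verifies bipartiteness (the paper only asserts that no odd cycle survives), and your observation that every edge lies on exactly two pentagons gives the lower bound $|E'|\ge 12/2=6$ that the paper leaves implicit.
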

\begin{proof}
Name the vertices of dodecahedron $G$ as shown in Figure \ref{G-DGWL}. From $G$, remove the edge $u_7u_{17}$ common to the odd cycles $u_7u_{17}u_{16}u_{15}u_6u_7$ and $u_7u{17}u_{18}u_9u_8u_7$, the edge $u_{13}u_{20}$ common to $u_{13}u_{20}u_{16}u_{15}u_{14}u_{13}$ and $u_{13}u_{20}u_{19}u_{11}u_{12}u_{13}$, the edge $u_{18}u_{19}$ common to $u_{18}u_{19}u_{20}u_{16}u_{17}u_{18}$ and $u_{18}u_{19}u_{11}u_{10}u_9u_{18}$, the edge $u_{5}u_{6}$ common to $u_5u_6u_7u_8u_1u_5$ and $u_5u_6u{15}u_{14}u_4u_5$, the edge $u_2u_{10}$ common to $u_2u_{10}u_9u_8u_1u_2$ and $u_2u_{10}u{11}u_{12}u_3u_2$ and the edge $u_3u_4$ common to  $u_3u_4u_5u_1u_2u_3$ and $u_3u_4u{14}u_{13}u_{12}u_3$.

The resultant graph $H=G-\{u_7u_{17},u_{13}u_{20},u_{18}u_{19},u_{5}u_{6},u_2u_{10},u_3u_4\}$ contains no odd cycles, as shown in \ref{G-DDG-MBSG}, and hence $H$ is a maximal bipartite subgraph of $G$. Let $E'=\{u_7u_{17},u_{13}u_{20},u_{18}u_{19},u_{5}u_{6},u_2u_{10},u_3u_4\}$. Then, the sparing number $\varphi(G)=|E'|=6$.
\end{proof}

\begin{remark}{\rm
For a dodecahedron $G$, $|E(G)|=30$. Hence, by Corollary \ref{C-T-SNGB1}, the number of edges in a maximal bipartite subgraph of $G$ is $b(G)= |E(G)|-\varphi(G)=30-6=24$.}
\end{remark}

\begin{figure}[h!]
\begin{center}
\begin{subfigure}[b]{0.4\textwidth}
\includegraphics[scale=0.4]{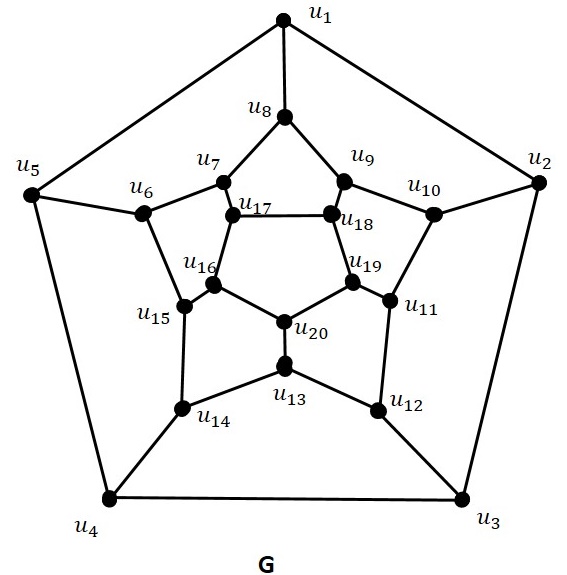}
\caption{\small \sl A dodecahedron.}\label{G-DDGWL}
\end{subfigure}
\quad
\begin{subfigure}[b]{0.4\textwidth}
\includegraphics[scale=0.4]{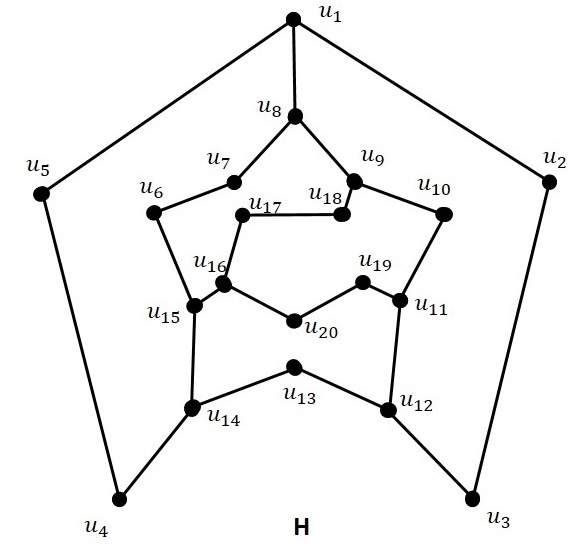}
\caption{\small \sl Maximal bipartite subgraph of a dodecahedron.}\label{G-DDG-MBSG}
\end{subfigure}
\caption{}
\end{center}  
\end{figure}

\section{Conclusion}
In this paper, we have established some results on the sparing number of certain graphs and established some relations between the sparing number and some other parameters of certain graph classes. The admissibility of weak IASI by various graph classes, graph operations and graph products and finding the corresponding sparing numbers are open.

More properties and characteristics of different types of IASIs, both uniform and non-uniform, are yet to be investigated. The problems of establishing the necessary and sufficient conditions for various graphs and graph classes to have certain IASIs are also open.

\end{document}